\documentclass[11pt,reqno]{amsart}
\usepackage{ytableau,tikz,hyperref}
\usepackage{palatino}
\usepackage{mathtools}
\usepackage[left=3cm,right=3cm,top=3cm,bottom=3cm]{geometry}
\usepackage{mathrsfs}
\usepackage[shortlabels,inline]{enumitem}

\newcommand\Par{\mathscr{P}}

\newtheorem{theorem}{Theorem}
\newtheorem{remark}{Remark}
\newtheorem{example}{Example}
\newcommand\bremark{\begin{remark}\begin{upshape}}
\newcommand\eremark{\end{upshape}\end{remark}}
\newtheorem{proposition}{Proposition}
\newtheorem{corollary}{Corollary}

\DeclareMathOperator{\VHive}{VHive}
\DeclareMathOperator{\EHive}{EHive}
\DeclareMathOperator{\SSYT}{SSYT}

\DeclareMathOperator{\GT}{GT}
\DeclareMathOperator{\NE}{NE}
\DeclareMathOperator{\SE}{SE}
\DeclareMathOperator{\VE}{VE}
\DeclareMathOperator{\U}{\mathfrak{U}}
\DeclareMathOperator{\gl}{\mathfrak{gl}}

\allowdisplaybreaks
\title[A second reduction-type formula for the refined LR coefficients in type A]{A second reduction-type formula for the refined Littlewood--Richardson coefficients in type A}
\author[]{Siddheswar Kundu}
\address{Department of Mathematics, Indian Institute of Technology Kanpur, Kanpur 208016, India.}
\email{kundusidhu96@gmail.com}
\keywords{} 
\subjclass[]{05E05}
\begin{document}
\begin{abstract}
For a permutation $w$ in the symmetric group $S_n$ and partitions $\lambda, \mu, \nu $ with at most $n$ parts,
the refined Littlewood--Richardson (LR) coefficients $c^{\nu}_{\lambda,\mu}(w)$ in type $A_n$ count the multiplicity of the irreducible polynomial representation $V(\nu)$ of the general linear algebra $\gl_n(\mathbb{C})$ appearing in the decomposition of the Kostant--Kumar submodule $K(\lambda,w,\mu)$ of the tensor product $V(\lambda) \otimes V(\mu)$ of two irreducible polynomial $\gl_n(\mathbb{C})$-modules. In this paper, we establish a second reduction-type formula for $c^{\nu}_{\lambda,\mu}(w)$, extending the second reduction formula for the classical Littlewood--Richardson coefficients $c^{\nu}_{\lambda,\mu}$. The proof relies on the hive model.     
\end{abstract}
\maketitle
\section{Introduction}
The Littlewood--Richardson coefficients, represented by $c^{\nu}_{\lambda,\mu}$, lie at the intersection of three major mathematical fields: algebraic combinatorics, representation theory, and algebraic geometry. Originally introduced to describe the multiplication of Schur functions $(s_{\lambda}s_{\nu}=\sum_{\nu}c^{\nu}_{\lambda,\mu}s_{\nu})$, these non-negative integers fundamentally govern the decomposition of tensor products of irreducible polynomial representations for the general linear algebra $\gl_n(\mathbb{C})$ and the cup product of two Schubert classes in the cohomology ring of a Grassmannian.

Let $V(\lambda)$ be the irreducible polynomial representaion of $\mathfrak{gl}_n(\mathbb{C})$ corresponding to the dominant integral weight $\lambda$ and $v_{\lambda}$ be a non-zero highest weight vector in $V(\lambda)$. For an element $w$ of the Weyl group $S_n$, let $v_{w\mu}$ denote a non-zero vector in the one-dimensional weight space of weight $w\mu$ in $V(\mu)$. Then the cyclic $\gl_n(\mathbb{C})$-submodule $\U\gl_n(v_{\lambda} \otimes v_{w\mu})$ of $V(\lambda) \otimes V(\mu)$, where $\U\gl_n$ denotes the universal enveloping algebra of $\gl_n(\mathbb{C})$, is called the {\em Kostant--Kumar module} \cite[\S5]{Mrigendra:adv:arxiv}, and it is denoted by $K(\lambda, w, \mu)$. The decomposition rule of $K(\lambda, w, \mu)$ \cite[\S9.4]{Mrigendra:adv:arxiv} is expressed as follows:
$$ K(\lambda, w, \mu)= \bigoplus_{\nu} V(\nu)^{\oplus c^{\nu}_{\lambda,\mu}(w)}.$$
Following \cite[\S9.3.2]{Mrigendra:adv:arxiv}, we refer to $c^{\nu}_{\lambda,\mu}(w)$ as the {\em refined Littlewood--Richardson coefficients}. Since $K(\lambda, w_0, \mu) = V(\lambda) \otimes V(\mu)$ \cite[Remark 5.3]{Mrigendra:adv:arxiv}, where $w_0$ denotes the longest element in $S_n$, it follows that $c^{\nu}_{\lambda,\mu}(w_0)$ specializes to the classical coefficient $c^{\nu}_{\lambda,\mu}$.

In \cite{Hive-main}, two reductive formulae for the classical Littlewood--Richardson coeﬃcients are presented in geometric terms. Their combinatorial equivalents are as follows:
\begin{theorem}
\begin{upshape}
    (First reduction formula \cite[Theorem 2.1]{Hive-JCTA})
\end{upshape}
   Let $\lambda,\mu,\nu $ be partitions such that $\lambda,\mu \subseteq \nu$ and $|\lambda|+|\mu|=|\nu|$. We assume that $\nu$ has $k$ nonzero parts. Also, if for any three indices $\alpha,\beta,\gamma$ such that $\alpha +\beta + \gamma =2k+1$, then 
   \begin{equation*}
 c^{\nu}_{\lambda,\mu} = \left\{ 
    \begin{array}{ll}
     c^{\nu-\nu_{\alpha+\beta-k}}_{\lambda-\lambda_{\alpha},\mu-\mu_{\beta}}.
         & \quad \text{if} \quad \lambda_{\alpha}+\mu_{\beta}=\nu_{\alpha+\beta-k},  \\
        0 & \quad \text{if} \quad \lambda_{\alpha}+\mu_{\beta} > \nu_{\alpha+\beta-k}. 
    \end{array}  \right. 
\end{equation*}
\end{theorem}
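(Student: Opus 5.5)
The plan is to work in the hive model. By the hive model, $c^{\nu}_{\lambda,\mu}$ is the number of integer hives of size $k$ whose boundary edge labels are $\lambda$, $\mu$ and $\nu$, with $\lambda,\mu,\nu$ truncated to $k$ parts; this truncation is legitimate since $\lambda,\mu\subseteq\nu$ and $\ell(\nu)=k$. An equivalent and convenient description uses LR tableaux, or hives encoded by their interior edge labels, in which the rhombus inequalities become betweenness conditions along lines parallel to the sides.

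The first step is geometric. The condition $\alpha+\beta+\gamma=2k+1$ singles out three boundary edges:
\begin{itemize}
\item the $\alpha$-th edge on the $\lambda$-side;
\item the $\beta$-th edge on the $\mu$-side;
\item the $\gamma'$-th edge on the $\nu$-side, where $\gamma'=\alpha+\beta-k$ and $\gamma' = k+1-\gamma$ up to the orientation convention.
\end{itemize}
The three lines through these edges, each parallel to one side of the big triangle, meet in a single point or small triangle in the interior. They cut the hive triangle into three "arms" that emanate from the central point. The key identity to establish is that, in any hive, the sum of edge labels along the relevant path gives
\[\lambda_\alpha+\mu_\beta-\nu_{\alpha+\beta-k}=\sum(\text{rhombus slacks in a certain region})\le 0.\]
I would obtain this by summing the rhombus inequalities over the rhombi tiling the corresponding strip. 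Equivalently, I would telescope the hive's vertex values along the three lines, so that the boundary differences produce exactly $\lambda_\alpha+\mu_\beta-\nu_{\alpha+\beta-k}$ and every interior contribution is a (non-positive) rhombus defect. This immediately gives $c^\nu_{\lambda,\mu}=0$ when $\lambda_\alpha+\mu_\beta>\nu_{\alpha+\beta-k}$.

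In the equality case all rhombus inequalities in that strip become equalities. In the edge-label picture, this means the labels along the three lines through the central point are forced: they are constant, and equal to $\lambda_\alpha$, $\mu_\beta$ and $\nu_{\alpha+\beta-k}$ respectively. I would then define a map deleting these three lines, sliding the three remaining sub-triangular and trapezoidal pieces together. This is the standard "remove a row of the puzzle/hive" operation, and it yields a hive of size $k-1$ with boundaries $\lambda$ minus its $\alpha$-th part, $\mu$ minus its $\beta$-th part, and $\nu$ minus its $(\alpha+\beta-k)$-th part. Conversely, given such a smaller hive, I would reinsert the three lines with the forced constant labels. I would then check that the resulting labels satisfy all rhombus inequalities; the rhombi straddling the inserted lines reduce to the inequalities $\lambda_{\alpha-1}\ge\lambda_\alpha\ge\lambda_{\alpha+1}$ and their analogues together with neighbouring rhombi of the smaller hive. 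Bijectivity gives the displayed equality.

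The main obstacle is the bookkeeping in the equality case. One must show rigorously that tightness of the summed inequality forces every individual label on the three lines to take the claimed value, and that reinsertion preserves the hive conditions near the central vertex where the three lines meet. There the rhombi involve labels from all three arms, and I expect a short case check of the three rhombus orientations at that vertex is needed. I would handle this first, using the integrality-free form of the rhombus inequalities so that the argument is purely linear.
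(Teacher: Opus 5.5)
The paper does not prove this statement; it only quotes it from the literature, so your plan has to stand on its own. The first half is the standard hive argument and is fine.

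\begin{itemize}
\item Put $\gamma'=\alpha+\beta-k$. The three \emph{half}-strips of rhombi starting at the boundary edges labelled $\lambda_\alpha$, $\mu_\beta$ and $\nu_{\gamma'}$ meet in one upward unit triangle. (Only these half-strips are forced in the equality case, not the full lines.)
\item The triangle relation in that central triangle writes $\nu_{\gamma'}-\lambda_\alpha-\mu_\beta$ as a sum of $k-1$ nonnegative rhombus slacks. This gives vanishing when $\lambda_\alpha+\mu_\beta>\nu_{\gamma'}$. In the equality case every slack is zero, so the forcing you list as an obstacle is immediate.
\item The deletion map is also sound. Zero slack makes both pairs of opposite edges of each arm rhombus equal, so the rails glue consistently. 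Each new rhombus across a seam is sandwiched by the two old rhombi flanking the arm rhombus, together with the constant rung value.
\end{itemize}
So your argument does prove $c^{\nu}_{\lambda,\mu}\le c^{\nu-\nu_{\gamma'}}_{\lambda-\lambda_\alpha,\mu-\mu_\beta}$.

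The gap is the reverse inequality, namely that reinsertion always yields a hive. You say the new rhombi reduce to $\lambda_{\alpha-1}\ge\lambda_\alpha\ge\lambda_{\alpha+1}$, their analogues and neighbouring rhombi of the smaller hive, plus a short case check at the centre. That is not enough. A rhombus made of an arm triangle and an adjacent triangle of the smaller hive requires the removed value to separate the smaller hive's labels on the two sides of that seam. For example, along the $\mu$-arm, horizontal edges just left of the seam must be $\ge\mu_\beta$ and those just right of it must be $\le\mu_\beta$; the same holds for $\lambda_\alpha$ and $\nu_{\gamma'}$ on the other seams.

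Rhombi of the smaller hive only give monotonicity along a seam. On one side of each seam you can induct from the seam's boundary end, using $\mu_{\beta-1}\ge\mu_\beta$ and its analogues. On the other side the monotonicity points the wrong way, so the induction must start where the three seams meet, and there the needed bound is global. For instance, let $c'$ be the horizontal edge leaving the meeting point to the right:
\begin{itemize}
\item the triangle above $c'$ gives $c'=b'-a'$;
\item a chain of rhombi along one seam to the $\nu$-side gives $b'\le\nu_{\gamma'+1}$;
\item a chain along another seam to the $\lambda$-side gives $a'\ge\lambda_{\alpha-1}$.
\end{itemize}
Hence $c'\le\nu_{\gamma'+1}-\lambda_{\alpha-1}\le\nu_{\gamma'}-\lambda_\alpha=\mu_\beta$, which uses the equality hypothesis a second time. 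The rhombi containing the central triangle need similar chains to the boundary; they are not settled by a local case check. You must supply the three corner estimates and the seam inductions, and treat arms of length zero, where the meeting point lies on the boundary. Until then bijectivity, and hence the equality of coefficients, is not established.
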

\begin{theorem}\begin{upshape}
    (Second reduction formula \cite[Theorem 2.3]{Hive-reduction})
\end{upshape}
\label{thm:second}
   Consider three partitions, $\lambda,\mu,\nu $, each with at most $n$ parts such that $|\lambda|+|\mu|=|\nu|$. Assume there exist indices $i,j,k$ such that $i+j+k=n$ and $\lambda_i+\mu_j>\nu_1 +\nu_{n-k+1}$. Suppose, moreover, that $\lambda_i >\lambda_{i+1}, \mu_i >\mu_{i+1}$ and $\nu_{n-k}>\nu_{n-k+1}$. Then we obtain 
   $$ c^{\nu}_{\lambda,\mu} = c^{\nu-(1^{n-k})}_{\lambda-(1^i),\mu-(1^j)}.$$
\end{theorem}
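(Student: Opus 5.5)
The plan is to work in the hive model: $c^{\nu}_{\lambda,\mu}$ is the number of integer $n$-hives with boundary labels determined by $\lambda,\mu,\nu$, i.e.\ integer labelings of the triangular array of vertices satisfying the rhombus inequalities. I would build an explicit bijection between hives with boundary $(\lambda,\mu,\nu)$ and hives with boundary $(\lambda-(1^i),\mu-(1^j),\nu-(1^{n-k}))$. Both triples have the same total size, since $i+j=n-k$. The hypotheses $\lambda_i>\lambda_{i+1}$, $\mu_j>\mu_{j+1}$ and $\nu_{n-k}>\nu_{n-k+1}$ guarantee that the reduced sequences are still partitions, so the right-hand side makes sense.

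The candidate bijection is to subtract a fixed ``tent'' function $\tau$ from every hive. Let $\tau$ be the piecewise linear function on the hive triangle that takes the boundary values of $(1^i),(1^j),(1^{n-k})$, i.e.\ the unique hive with boundary $((1^i),(1^j),(1^{n-k}))$. The relevant coefficient equals $1$, which corresponds to the triangle $i+j+k=n$. This $\tau$ is linear on three regions cut out by lines through a single interior vertex determined by $(i,j,k)$. It therefore satisfies every rhombus inequality with equality, except on rhombi that straddle one of the three ``crease'' lines, where it has slack exactly $1$.

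For $h\mapsto h-\tau$ to map hives to hives, every hive $h$ with the original boundary must have rhombus slack $\ge 1$ on each rhombus crossing a crease. This is the main obstacle, and it is where $\lambda_i+\mu_j>\nu_1+\nu_{n-k+1}$ is used. My first attempt would be the following. Suppose some crease rhombus in $h$ is flat, i.e.\ has slack $0$. Then propagate this flatness along the crease line using the standard fact that flat rhombi force neighbouring flatness, via the octahedron/honeycomb "flatspace" argument. This reduces to a chain of flat rhombi running from boundary to boundary. Summing the boundary differences along that chain should give $\lambda_i+\mu_j\le\nu_1+\nu_{n-k+1}$, a contradiction.

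If that propagation is messy, the fallback is a direct inequality. Sum rhombus inequalities over the region between the crease and the boundary to express the slack of each crease rhombus as at least $\lambda_i+\mu_j-\nu_1-\nu_{n-k+1}$ minus nonnegative terms. Because of the three strict-decrease hypotheses, the boundary rhombi on the creases have slack $\ge1$ automatically. So only the interior ones need the main inequality, and I expect it suffices to bound them using monotonicity of hive entries along lines, through the standard Gelfand--Tsetlin interlacing consequences of the rhombus inequalities.

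Conversely, adding $\tau$ to a hive with the reduced boundary always gives a hive, since $\tau$ itself satisfies all rhombus inequalities. So the two maps are mutually inverse injections and the counts agree, which gives Theorem~\ref{thm:second}. For the refined version treated later in the paper, I would check that the same subtraction preserves whatever additional $w$-dependent constraints cut out the hives counted by $c^{\nu}_{\lambda,\mu}(w)$.
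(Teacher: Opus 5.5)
Your framework matches the paper's. Theorem~\ref{thm:second} is the case $w=w_0$ of Theorem~\ref{thm:main}, whose proof is the map $h\mapsto h-\bar h$, with $\bar h$ the unique hive of boundary $((1^i),(1^j),(1^{n-k}))$. Several parts of your proposal are correct:
your description of $\bar h$ (creases along $a-b=i$, $b=j$, $a=n-k$ for vertices $h_{a,b}$, meeting at $(n-k,j)$, with bend $1$; for instance the straddling $\NE$ rhombi are exactly $R^{\NE}_{i+l,l}$, $1\le l\le j$, as in the paper's formula for $R^{\NE}_{k,l}(\bar h)$), the easy direction $h'\mapsto h'+\bar h$, and the check that the reduced triples are partitions.

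The gap is the one step with real content: that every $h\in\VHive_{\mathbb Z}(\lambda,\mu,\nu)$ has slack $\ge 1$ on every crease rhombus. This is the only place $\lambda_i+\mu_j>\nu_1+\nu_{n-k+1}$ can enter. The paper does not prove it; it imports it as \cite[Theorem 3.6]{Hive-reduction}, which is essentially the whole content of the statement. Neither of your sketches establishes it.

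First, ``flat rhombi force neighbouring flatness'' is not a property of hives. $R^{\NE}_{i+l,l}(h)=t_{i+l,l}-t_{i+l-1,l}$ counts the entries equal to $i+l$ in row $l$ of the LR tableau $T$. For $\lambda=\mu=(1,1,0)$, $\nu=(2,1,1)$ and $T$ with rows $(1),(3)$, one gets $R^{\NE}_{2,1}=0$ but $R^{\NE}_{3,2}=1$. So no boundary-to-boundary chain of flat rhombi is forced, and you never show how summing along one would give $\lambda_i+\mu_j\le\nu_1+\nu_{n-k+1}$.

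Second, in the fallback, ``slack $\ge\lambda_i+\mu_j-\nu_1-\nu_{n-k+1}$ minus nonnegative terms'' gives no lower bound unless those terms are bounded above. Your claim that the strict-decrease hypotheses alone give slack $\ge1$ on the crease rhombi at the boundary is false. Take $n=3$, $i=j=k=1$, $\lambda=\mu=(2,1,0)$, $\nu=(3,2,1)$: all three strict inequalities hold. Yet the LR tableau with rows $(1,3),(2)$ gives a hive with $R^{\NE}_{2,1}(h)=t_{2,1}-t_{1,1}=1-1=0$. For $n=3$ every crease rhombus touches the boundary, so your claim would give the formula here without the main inequality. That contradicts $c^{(3,2,1)}_{(2,1),(2,1)}=2\ne 1=c^{(2,1,1)}_{(1,1),(1,1)}$.

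What is missing is an actual argument that a flat crease rhombus, in each of the three orientations, forces $\nu_1+\nu_{n-k+1}\ge\lambda_i+\mu_j$. In the $n=3$ case above it is a short count. Suppose row $1$ of $T$ has no $2$. Then all $\nu_1-\lambda_1$ ones of $T$ lie in row $1$, so row $1$ contains $p=\lambda_1+\mu_1-\nu_1$ threes. Also $\nu_3-\lambda_3\ge p+\mu_3$, which yields $\nu_1+\nu_3\ge\lambda_1+\mu_1$. The general statement, for all crease rhombi, is exactly what you would have to prove, or else cite from \cite[Theorem 3.6]{Hive-reduction}, as the paper does.
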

Bijective proofs for the usual Littlewood--Richardson coefficient reduction formulae were established in \cite{Hive-JCTA,Hive-Korean}, followed by a hive-model proof for the second reduction formula in \cite{Hive-reduction}. In Theorem~\ref{thm:main}, we present a new second reduction-type formula for $c^{\nu}_{\lambda,\mu}(w)$, which we prove via the hive model.
\section{Tableau model}
Let $\mathbb{Z}_{\geq 0}=\{0,1,2,\dots \}$. For a positive integer $n$, we use $\Par_n$ to represent the set of partitions with at most $n$ non-zero parts, i.e.,
$$ 
\Par_n:= \{ \lambda=(\lambda_1,\lambda_2,\dots,\lambda_n) \in \mathbb{Z}^n_{\geq 0} :\lambda_1 \geq  \dots \geq \lambda_n \geq 0 \}.
$$
For $\lambda \in \Par_n$, its {\em size} is $|\lambda|:=\lambda_1 +\cdots + \lambda_n$ and its {\em length} $\ell(\lambda)$ is the number of non-zero parts.
Graphically, we identify $\lambda$ with its Young diagram $Y(\lambda)$, by placing $\lambda_i$ square cells in row $i$, aligning every row to the top and left. The figure below shows the Young diagram $Y(4,3,1)$.
\begin{center}
 \ytableausetup{nosmalltableaux,boxsize=0.6 cm}
\begin{ytableau} 
\null & \null & \null & \null \\
\null & \null & \null \\
\null \\
\end{ytableau}  
\end{center}
A {\em semistandard Young tableau} of shape $\mu$ with {\em content} $\delta =(\delta_1,\delta_2,\dots)$ is a filling of boxes of the Young diagram $Y(\mu)$ with $\delta_j$ copies of the number $j$ such that the entries are:
\begin{itemize}
    \item weakly increasing along each row, and
    \item strictly increasing down each column.
\end{itemize}
The {\em reverse row word} of a semistandard Young tableau $T$, denoted by $r_T$, is obtained by reading the entries of $T$ row by row from right to left, moving from the top row to the bottom row (as illustrated below).
$$
T = \ytableaushort{2334,345,4} \ \text{and} \ r_T = 43325434
$$
For a partition $\mu \in \Par_n$, let $\SSYT(\mu)$ denote the set of semistandard Young tableaux of shape $\mu$ with entries in $\{1, 2, \dots, n\}$. The crystal raising and lowering operators $e_i, f_i $ $(1 \leq i \leq n-1)$ act on the full set of words in the alphabet $\{1, 2, \dots, n\}$ \cite[Chapter 5]{Lothaire}, and thus act on the set $\SSYT(\mu)$ through their associated reverse row words.

Given $w \in S_n$ with a fix reduced decomposition $w = s_{i_1} s_{i_2} \cdots s_{i_k}$ and a partition $\mu \in \Par_n$, the {\em Demazure crystal} $\mathfrak{B}_w(\mu)$ is given by:
$$  \mathfrak{B}_w(\mu) :=\{f_{i_1}^{t_1} f_{i_2}^{t_2} \cdots f_{i_k}^{t_k} r_{T^{\mu}}: t_j \geq 0\} ,$$
where $T^{\mu}$ is the unique semistandard Young tableau whose shape and weight are both given by $\mu$.
For $w=w_0,$ we obtain $\mathfrak{B}_{w_0}(\mu) = \SSYT(\mu)$.

A word $w = w_1w_2\cdots w_k$ in the alphabet $\{1,2,\dots,n\}$ is called {\em dominant} if for every $1 \leq i <n$, any prefix $ w_1w_2\cdots w_t$ (for $1 \leq t \leq k$) of $w$ contains at least as many occurrences of $i$ as $i+1$. For example, $12312$ is dominant, but $12231$ is not.
\begin{theorem}
\begin{upshape}
    \cite[Theorem 5.25]{Joseph} \cite[Theorem 4.1]{Mrigendra:saturation}
\end{upshape}
\label{thm:joseph}
For $\lambda,\mu,\nu \in \Par_n,$ and $w \in S_n,$ let
\[ \SSYT_{\lambda,\mu}^{\nu}(w) :=\{T \in \mathfrak{B}_w(\mu): r_{T^{\lambda}}*r_T \text{ is a dominant word of weight } \nu\},\] where $*$ denotes concatenation of words. Then $c^{\nu}_{\lambda,\mu}(w)$ is the cardinality of the set $\SSYT_{\lambda,\mu}^{\nu}(w)$.
\end{theorem}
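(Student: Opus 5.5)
The plan is to pass from the module $K(\lambda,w,\mu)$ to the Demazure module $V_w(\mu)=\U\mathfrak{b}\cdot v_{w\mu}$, where $\mathfrak{b}$ is the upper Borel subalgebra. Then I would use the crystal description of Demazure modules and of tensor products to count highest weight vectors. Throughout I would write $\mathfrak{n}^{\pm}$, $\mathfrak{h}$ for the nilpotent and Cartan parts, so that $\U\gl_n=\U\mathfrak{n}^-\,\U\mathfrak{h}\,\U\mathfrak{n}^+$.

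\emph{Step 1: reduce $K$ to a Demazure module.} Since $\mathfrak{n}^+v_\lambda=0$, an element $x\in\mathfrak{n}^+$ acts on $v_\lambda\otimes u$ as $v_\lambda\otimes xu$. Also $\mathfrak{h}$ acts on $v_\lambda\otimes v_{w\mu}$ by a scalar. Hence
\[
\U\mathfrak{b}\,(v_\lambda\otimes v_{w\mu})=v_\lambda\otimes \U\mathfrak{b}\, v_{w\mu}=v_\lambda\otimes V_w(\mu),
\]
and therefore $K(\lambda,w,\mu)=\U\mathfrak{n}^-\bigl(v_\lambda\otimes V_w(\mu)\bigr)$. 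The multiplicity $c^\nu_{\lambda,\mu}(w)$ is then the dimension of the space of $\mathfrak{n}^+$-invariant vectors of weight $\nu$ in this module.

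\emph{Step 2: crystal description.} Here I would invoke Kashiwara's and Littelmann's theorem that $V_w(\mu)$ has a crystal basis (the Demazure crystal $\mathfrak{B}_w(\mu)$ defined above), independent of the chosen reduced word. Using it, I would argue that the crystal of $K(\lambda,w,\mu)$ is the union of those connected components of $\mathfrak{B}(\lambda)\otimes\mathfrak{B}(\mu)$ that meet $b_\lambda\otimes\mathfrak{B}_w(\mu)$. This is the content of Joseph's result, whose key input is his theorem that $b_\lambda\otimes\mathfrak{B}_w(\mu)$ is a disjoint union of Demazure crystals. Granting this, the highest weight elements of these components lie in $b_\lambda\otimes\mathfrak{B}_w(\mu)$ itself. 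The reason is that applying raising operators $e_i$ to $b_\lambda\otimes b$ only changes the second factor, which remains in $\mathfrak{B}_w(\mu)$ because Demazure crystals are stable under the $e_i$.

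\emph{Step 3: translate to words.} An element $b_\lambda\otimes b$ is highest weight exactly when $\varepsilon_i(b)\le\langle\lambda,\alpha_i^\vee\rangle$ for all $i$. In the word model, with $b_\lambda=r_{T^\lambda}$ and $b=r_T$, this condition says that $r_{T^\lambda}*r_T$ is a dominant word (a lattice word); its weight is $\lambda+\mathrm{wt}(T)$, which must equal $\nu$. Counting such $T\in\mathfrak{B}_w(\mu)$ gives $|\SSYT^\nu_{\lambda,\mu}(w)|$.

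The main obstacle is Step 2, namely justifying that the character of $K(\lambda,w,\mu)$ equals the character of the crystal components generated by $b_\lambda\otimes\mathfrak{B}_w(\mu)$. This needs Joseph's decomposition of $b_\lambda\otimes\mathfrak{B}_w(\mu)$ into Demazure crystals, combined with a compatibility of global bases with the submodule $\U\mathfrak{n}^-(v_\lambda\otimes V_w(\mu))$. Alternatively, one could use the Kumar/Mathieu identification of $K(\lambda,w,\mu)$ via sections on $G\times^B X_w$. I would cite Joseph's Theorem 5.25 for this step rather than reprove it.
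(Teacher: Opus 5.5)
Your proposal is correct and takes essentially the same route as the paper, which gives no argument of its own and simply cites Joseph's Theorem 5.25 (and Theorem 4.1 of the saturation paper). You also defer the one substantive step to Joseph: the identity between the character of $K(\lambda,w,\mu)=\U\mathfrak{n}^-(v_\lambda\otimes V_w(\mu))$ and the sum of characters of the components meeting $b_\lambda\otimes\mathfrak{B}_w(\mu)$, whose standard derivation combines Kumar's formula $\mathrm{ch}\,K(\lambda,w,\mu)=D_{w_0}(e^{\lambda}\,\mathrm{ch}\,V_w(\mu))$ with Joseph's decomposition theorem; the steps you do supply (the reduction via $\mathfrak{n}^+v_\lambda=0$, the $e_i$-stability argument, and translating $\varepsilon_i(b)\le\lambda_i-\lambda_{i+1}$ into dominance of $r_{T^\lambda}*r_T$) are all sound.
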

\begin{corollary}
\label{cor:LR-rule}
Since $\mathfrak{B}_{w_0}(\mu) = \SSYT(\mu)$, $c^{\nu}_{\lambda,\mu}$ is the cardinality of the set \[ \SSYT_{\lambda,\mu}^{\nu}:= \SSYT_{\lambda,\mu}^{\nu}(w_0) =\{T \in \SSYT(\mu): r_{T^{\lambda}}*r_T \text{ is a dominant word of weight } \nu\}.\]
\end{corollary}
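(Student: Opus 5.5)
Corollary~\ref{cor:LR-rule} follows directly from Theorem~\ref{thm:joseph} by specializing to $w=w_0$. The plan has two steps. First, verify the identity $\mathfrak{B}_{w_0}(\mu)=\SSYT(\mu)$. Second, substitute it into the definition of $\SSYT^{\nu}_{\lambda,\mu}(w)$ and use $c^{\nu}_{\lambda,\mu}(w_0)=c^{\nu}_{\lambda,\mu}$.

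For the first step, fix a reduced word $w_0=s_{i_1}\cdots s_{i_N}$. The inclusion $\mathfrak{B}_{w_0}(\mu)\subseteq \SSYT(\mu)$ holds because each $f_i$ preserves the set of reverse row words of semistandard tableaux of shape $\mu$ with entries in $\{1,\dots,n\}$. This set is the crystal $B(\mu)$, which is a union of connected components of the word crystal by \cite[Chapter 5]{Lothaire}.

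For the reverse inclusion, $\SSYT(\mu)$ is connected with unique highest weight element $T^{\mu}$. By the standard Demazure character/crystal fact, the Demazure crystal attached to the longest element is the whole highest weight crystal, independent of the chosen reduced word; this is Littelmann and Kashiwara's result underlying the cited statement. This is the only nontrivial input. Since the paper already asserts $\mathfrak{B}_{w_0}(\mu)=\SSYT(\mu)$, I would take it as given. If it had to be proved directly, I would argue by induction on the length of $w$ that $\mathfrak{B}_w(\mu)$ is closed under the string operators prescribed by the Demazure recursion $\mathfrak{B}_{s_iw}=\bigcup_t f_i^t\mathfrak{B}_w$. For $w_0$, the resulting set is stable under every $e_i$ and $f_i$, and hence equals the connected crystal $\SSYT(\mu)$.

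For the second step, substitute $\mathfrak{B}_{w_0}(\mu)=\SSYT(\mu)$ into the definition. This gives $\SSYT^{\nu}_{\lambda,\mu}(w_0)=\{T\in\SSYT(\mu): r_{T^\lambda}*r_T \text{ dominant of weight }\nu\}$. By Theorem~\ref{thm:joseph}, its cardinality is $c^{\nu}_{\lambda,\mu}(w_0)$. Since $K(\lambda,w_0,\mu)=V(\lambda)\otimes V(\mu)$ by \cite[Remark 5.3]{Mrigendra:adv:arxiv}, the multiplicities agree, so $c^{\nu}_{\lambda,\mu}(w_0)=c^{\nu}_{\lambda,\mu}$, which is the claim. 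The only potential obstacle is justifying $\mathfrak{B}_{w_0}(\mu)=\SSYT(\mu)$, and that is handled as described above.
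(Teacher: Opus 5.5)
Your proposal is correct and follows the paper's argument: specialize Theorem~\ref{thm:joseph} to $w=w_0$, use $\mathfrak{B}_{w_0}(\mu)=\SSYT(\mu)$, and use $c^{\nu}_{\lambda,\mu}(w_0)=c^{\nu}_{\lambda,\mu}$, which comes from $K(\lambda,w_0,\mu)=V(\lambda)\otimes V(\mu)$. The paper likewise takes $\mathfrak{B}_{w_0}(\mu)=\SSYT(\mu)$ as a known fact, so your optional sketch of it goes beyond what the paper supplies; as you note, that sketch rests on the reduced-word independence of Demazure crystals.
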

\section{Gelfand--Tsetlin pattern and Kogan faces}
\label{Sec:3}
A Gelfand--Tsetlin (GT) pattern of size $n$ is a triangular array  of real numbers, denoted by $X=(x_{i,j})_{1\leq j \leq i \leq n} \in \mathbb{R}^{n(n+1)/2}$ (see Figure~\ref{figure:array}), that satisfies the following ``North-East" (NE) and ``South-East" (SE) inequalities for $ 1 \leq j < i \leq n$:
$$
\NE_{i,j}(X)= x_{i,j} - x_{i-1,j} \geq 0 \text{ and }
\SE_{i,j}(X)=  x_{i-1,j} - x_{i,j+1} \geq 0.
$$
For $\lambda \in \Par_n,$ we let $\GT(\lambda)$ denote the set of all GT patterns $X=(x_{i,j})_{1\leq j \leq i \leq n} \in \mathbb{R}^{n(n+1)/2} $ whose bottom row matches $\lambda$, that is,
$$x_{n,j}=\lambda_{j} \text{ for } 1 \leq j \leq n .$$
Additionally, we define the subset of integer-labeled GT patterns in $\GT(\lambda)$ by $$\GT_{\mathbb{Z}}(\lambda):= \GT(\lambda) \cap \mathbb{Z}^{n(n+1)/2}.$$
\begin{figure}
    \centering
    \begin{tikzpicture}[scale=1.2]
    \draw (2.8,2*1.732) node {$x_{1,1}$};
	\draw (2.1,1.5*1.732) node {$x_{2,1}$};
    \draw (3.5,1.5*1.732) node {$x_{2,2}$};
	\draw (1.4,1.732) node {$x_{3,1}$};
	\draw (2.8,1.732) node {$x_{3,2}$};
    \draw (4.2,1.732) node {$x_{3,3}$};
	\draw (0.7,0.5*1.732) node {$x_{4,1}$};
	\draw (2.1,0.5*1.732) node {$x_{4,2}$};
	\draw (3.5,0.5*1.732) node {$x_{4,3}$};
    \draw (4.9,0.5*1.732) node {$x_{4,4}$};
	\draw (0,0) node {$x_{5,1}$};
	\draw (1.4,0) node {$x_{5,2}$};
	\draw (2.8,0) node {$x_{5,3}$};
	\draw (4.2,0) node {$x_{5,4}$};
    \draw (5.6,0) node {$x_{5,5}$};
\end{tikzpicture}
    \caption{A Gelfand--Tsetlin array for $n=5$}
    \label{figure:array}
\end{figure}
\begin{example}
\label{example:GT}
An example of a GT pattern in $\GT_{\mathbb{Z}}(4,3,2,0)$ is given below:
$$
\begin{tikzpicture}[scale=1]
	\draw (1.5,1.5*1.732) node {$2$};
	\draw (1,1.732) node {$2$};
	\draw (2,1.732) node {$1$};
	\draw (0.5,0.5*1.732) node {$4$};
	\draw (1.5,0.5*1.732) node {$2$};
	\draw (2.5,0.5*1.732) node {$1$};
	\draw (0,0) node {$4$};
	\draw (1,0) node {$3$};
	\draw (2,0) node {$2$};
	\draw (3,0) node {$0$};
    \draw (-0.5,1.5) node {$X=$};
\end{tikzpicture}.
$$
\end{example}
Let $X \in \GT_{\mathbb{Z}}(\lambda)$. Then for $1 \leq i \leq n,$ the row $x^{(i)}:=(x_{i,1},x_{i,2},\ldots, x_{i,i})$ defines a partition with $\ell(x^{(i)}) \leq i$. The definition of a GT pattern ensures that each skew shape $x^{(i)}/x^{(i-1)} (\text{taking }x^{(0)}: = \emptyset)$ is a horizontal strip, i.e., it does not contain a vertical domino. In fact, $X \in \GT_{\mathbb{Z}}(\lambda)$ if and only if $x^{(i)}/x^{(i-1)}$ is a horizontal strip for all $i$.

Filling the boxes of $x^{(i)}/x^{(i-1)}$ with the label $i$ for each $1 \leq i \leq n$ produces a semistandard Young tableau $\tilde{X}$. The resulting assignment defines a well-known bijection:
$$\Upsilon: \GT_{\mathbb{Z}}(\lambda) \rightarrow \SSYT_n(\lambda), X \mapsto \tilde{X}.$$
For instance, for the GT pattern in Example~\ref{example:GT}, we have
$$X^{(1)}=  \ytableausetup{mathmode,
notabloids,boxsize=1.5em}
  \begin{ytableau}
    1&1  
  \end{ytableau} \quad
  X^{(2)}=  \ytableausetup{mathmode,
notabloids,boxsize=1.5em}
  \begin{ytableau}
    1&1\\2   
  \end{ytableau} \quad 
  X^{(3)}=  \ytableausetup{mathmode,
notabloids,boxsize=1.5em}
  \begin{ytableau}
    1&1&3&3\\2&3\\3   
  \end{ytableau} \quad
 \Tilde{X}=X^{(4)}=  \ytableausetup{mathmode,
notabloids,boxsize=1.5em}
  \begin{ytableau}
    1&1&3&3\\2&3&4\\3&4   
  \end{ytableau}.
$$
\subsection{Kogan faces}
\label{sec:Kogan}
Let $F$ be a subset of $\{(i,j): 1 \leq j < i \leq n \}$. Then the {\em Kogan face} (see \cite[\S2.2.1]{Kogan},\cite[\S3.3 \& \S4.3]{Fujita:2}, \cite[\S5]{Fujita}) of $\GT(\lambda)$ associated to the face $F$ is defined by
$$\GT(\lambda,F):=\{X \in \GT(\lambda): \NE_{i,j}(X)=0 \text{ for } (i,j) \in F \}.$$
Next, for each pair $(i,j)$ satisfying $1 \leq j < i \leq n,$ we associate the simple transposition $s_{i-j}$. This construction is illustrated below for $n=5$.
$$ 
\begin{tikzpicture}[scale=1.2]
    \draw (2.8,2*1.732) node {$\bullet$};
	\draw (2.1,1.5*1.732) node {$\bullet$};
    \draw (3.5,1.5*1.732) node {$\bullet$};
	\draw (1.4,1.732) node {$\bullet$};
	\draw (2.8,1.732) node {$\bullet$};
    \draw (4.2,1.732) node {$\bullet$};
	\draw (0.7,0.5*1.732) node {$\bullet$};
	\draw (2.1,0.5*1.732) node {$\bullet$};
	\draw (3.5,0.5*1.732) node {$\bullet$};
    \draw (4.9,0.5*1.732) node {$\bullet$};
	\draw (0,0) node {$\bullet$};
	\draw (1.4,0) node {$\bullet$};
	\draw (2.8,0) node {$\bullet$};
	\draw (4.2,0) node {$\bullet$};
    \draw (5.6,0) node {$\bullet$};
    \draw[blue] (0.15,0.15) -- (0.7-0.15,0.5*1.732-0.15);
    \draw[blue] (0.7+0.15,0.5*1.732+0.15) -- (1.4-0.15,1.732-0.15);
    \draw[blue] (1.4+0.15,1.732+0.15) -- (2.1-0.15,1.5*1.732-0.15);
    \draw[blue] (2.1+0.15,1.5*1.732+0.15) -- (2.8-0.15,2*1.732-0.15);
    \draw[blue] (1.4+0.15,0+0.15) -- (2.1-0.15,0.5*1.732-0.15);
    \draw[blue] (2.1+0.15,0.5*1.732+0.15) -- (2.8-0.15,1.732-0.15);
    \draw[blue] (2.8+0.15,1.732+0.15)--(3.5-0.15,1.5*1.732-0.15);
    \draw[blue] (2.8+0.15,0+0.15) -- (3.5-0.15,0.5*1.732-0.15);
    \draw[blue] (3.5+0.15,0.5*1.732+0.15) -- (4.2-0.15,1.732-0.15);
    \draw[blue] (4.2+0.15,0+0.15)--(4.9-0.15,0.5*1.732-0.15);
    \draw[blue] (0.7/2-0.2,0.5*1.732/2+0.1) node {$s_4$};
    \draw[blue] (3.5/2-0.2,1.732/4+0.1) node {$s_3$};
    \draw[blue] (6.3/2-0.2,1.732/4+0.1) node {$s_2$};
    \draw[blue] (9.1/2-0.2,1.732/4+0.1) node {$s_1$};
    \draw[blue] (2.1/2-0.2,0.75*1.732+0.1) node {$s_3$};
    \draw[blue] (2.1/2-0.2+1.4,0.75*1.732+0.1) node {$s_2$};
    \draw[blue] (2.1/2-0.2+2.8,0.75*1.732+0.1) node {$s_1$};
    \draw[blue] (3.5/2-0.2,2.5*1.732/2+0.1) node {$s_2$};
    \draw[blue] (3.5/2-0.2 +1.4,2.5*1.732/2+0.1) node {$s_1$};
    \draw[blue] (4.9/2-0.2,1.75*1.732+0.1) node {$s_1$};
\end{tikzpicture}
$$
We order the elements of $F$ in lexicographically, where $(i, j) \prec (i', j')$ if and only if:
\begin{itemize}
    \item either $i < i',$
    \item or $i = i'$ and $j < j'$.
\end{itemize}
By writing the corresponding factors $s_{i-j}$ from left to right in this order, we define their product in $S_n$ to be $w_F$. When the length of $w_F = |F|,$ the subset $F$ is called {\em reduced}. Following \cite[Definition 5.1]{Fujita}, we then set
$$\hat{w}_F=w_0w_Fw_0,$$ where $w_0$ is the longest element in $S_n$. To illustrate, consider $F_1=\{ (2,1),(3,1),(5,2) \}$ and $F_2=\{(3,2),(4,3)\}$. Then $w_{F_1}=s_1s_2s_3$ is reduced, but $w_{F_2} = s_1s_1$ is not. 

For any $w \in S_n, \lambda \in \Par_n,$ we define
$$\GT(\lambda,w):= \bigcup_{ \substack{F: F \text{ is reduced},\\ \hat{w}_F=w} } GT(\lambda,F).$$
To illustrate this definition, let $n=5, w=s_3s_2,$ so that $w_0s_3s_2w_0=s_2s_3$. Then we have 
$$\GT(\lambda,s_3s_2) = \GT(\lambda,\{(3,1),(4,1)\}) \bigcup \GT(\lambda,\{(3,1),(5,2)\}) \bigcup \GT(\lambda,\{(4,2),(5,2)\}).$$
\begin{proposition}
\begin{upshape}
   \cite[Corollary 5.19]{Fujita} 
\end{upshape}
For $\lambda \in \Par_n$, $ w \in S_n$, the bijection $\Upsilon: \GT_{\mathbb{Z}}(\lambda) \rightarrow \SSYT_n(\lambda)$ maps $\GT_{\mathbb{Z}}(\lambda,w_0w) $ bijectively onto $ \mathfrak{B}_{w}(\lambda)$. 
\end{proposition}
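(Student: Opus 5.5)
The plan is to prove the statement by induction on $\ell(w)$, showing that both sides obey the same Demazure-type recursion. On the crystal side, for a simple reflection $s_i$ with $\ell(s_iw)=\ell(w)+1$, the definition of $\mathfrak{B}_w(\lambda)$ (via a reduced word beginning with $s_i$) gives
\[
\mathfrak{B}_{s_iw}(\lambda)=\{f_i^{t}\,b : b\in \mathfrak{B}_w(\lambda),\ t\ge 0\}\setminus\{0\}.
\]
In other words, $\mathfrak{B}_{s_iw}(\lambda)$ is the union of the full $i$-strings through elements of $\mathfrak{B}_w(\lambda)$, since $\mathfrak{B}_w(\lambda)$ is a union of string heads and tails. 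The base cases are as follows:
\begin{itemize}
\item $w=e$: then $\mathfrak{B}_e(\lambda)=\{r_{T^\lambda}\}$. On the GT side $w_0w=w_0$, and the reduced $F$ with $\hat w_F=w_0$ must have $|F|=\ell(w_0)$, so $F$ is the full set of NE edges. The resulting face is the single pattern with all $\NE_{i,j}=0$, i.e.\ $x_{i,j}=\lambda_j$, which is exactly $\Upsilon^{-1}(T^\lambda)$.
\item $w=w_0$: this is $F=\emptyset$, giving all of $\GT(\lambda)$, which matches $\mathfrak{B}_{w_0}(\lambda)=\SSYT(\lambda)$.
\end{itemize}
So it suffices to prove that $\GT_{\mathbb Z}(\lambda,w_0s_iw)$ is the union of the $i$-strings, transported by $\Upsilon$, through points of $\GT_{\mathbb Z}(\lambda,w_0w)$.

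Next I translate $f_i$ into GT language. Under $\Upsilon$, applying $f_i$ to $r_T$ changes one letter $i$ into $i+1$. Since rows $x^{(k)}$ for $k\neq i$ are unchanged, $f_i$ decreases exactly one entry $x_{i,j}$ of row $i$ by $1$. The index $j$ is chosen by the bracketing rule, which compares the row-$i$ entries with the interlacing rows $i-1$ and $i+1$; concretely, the bracketing counts $x_{i-1,j}-x_{i,j+1}$ and $x_{i,j}-x_{i+1,j+1}$ type differences. In particular, an $i$-string moves only the entries of row $i$, sliding between the bounds imposed by rows $i\pm1$.

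The core combinatorial step relates faces for $w_0w$ to faces for $w_0s_iw$. I would write $w_0s_iw=(w_0w)\cdot w^{-1}s_iw$, or work directly with $w_F=ww_0$ versus $w_F=s_iww_0$. A reduced $F$ for the longer element $ww_0$ (since $\ell(s_iww_0)=\ell(ww_0)-1$) is obtained by deleting from a suitable reduced $F'$ one edge, lying in a diagonal labelled $s_{n-i}$ after conjugation by $w_0$, that governs row $i$. Deleting this edge relaxes one equation $\NE=0$ on row $i$.

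The claim to establish is then the following. Given $X$ in a face $\GT(\lambda,F')$ with $\hat w_{F'}=w_0w$, the full $i$-string through $\Upsilon(X)$ stays inside $\bigcup \GT(\lambda,F)$ over reduced $F$ with $\hat w_F=w_0s_iw$. Conversely, every point of such a face lies on the string through a point of some $F'$-face, namely its $e_i^{\max}$ head. This follows once one shows that raising $e_i$ to the top of the string forces the relaxed entry to satisfy the deleted equation, i.e.\ to attain its NE bound.

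The main obstacle is this last compatibility. The lexicographic ordering in $w_F$ must be matched with which entry of row $i$ the operators $e_i$/$f_i$ act on, and one must check that the string head lands on an $F'$-face that is \emph{reduced}; several different $F$ give the same $w_F$, which makes this delicate. I would first try to prove a local lemma: for the string head $X$, every pair $(i,j)$ with $\NE_{i,j}(X)>0$ contributes an unmatched bracket, so $e_i$ acts unless all such NE-equalities hold. I would then pick $F'$ greedily, in the style of Kogan's ladder moves, to keep it reduced. If this becomes unmanageable, the fallback is to quote Kogan's identification of these faces with Schubert/Demazure characters, which gives equal cardinalities, and combine it with the inclusion $\Upsilon(\GT_{\mathbb Z}(\lambda,w_0w))\subseteq\mathfrak{B}_w(\lambda)$, which is easier.
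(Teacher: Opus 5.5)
The paper gives no proof of this statement; it quotes it from Fujita (Corollary 5.19). Your outer structure is fine: the base cases $w=e$ and $w=w_0$ are correct, and for $\ell(s_iw)=\ell(w)+1$ the set $\mathfrak{B}_{s_iw}(\lambda)$ is the $i$-string saturation of $\mathfrak{B}_w(\lambda)$. This reduces the proposition to the claim that $\GT_{\mathbb Z}(\lambda,w_0s_iw)$ is the $i$-string saturation of $\GT_{\mathbb Z}(\lambda,w_0w)$. That claim is essentially the whole content of the cited result, and you leave it unproved. Worse, the mechanism you sketch for it does not work.

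Take $n=3$, $\lambda=(2,1,0)$, $w=s_1$, $i=2$, and let $T_0$ have rows $1\,2$ and $2$.
\begin{itemize}
\item $r_{T_0}=f_1r_{T^\lambda}$, so $T_0\in\mathfrak{B}_{s_1}(\lambda)$, and $e_2T_0=0$ because $T_0$ has no letter $3$.
\item $\Upsilon^{-1}(T_0)$ has rows $(1),(2,1),(2,1,0)$. It lies in the face of the only reduced $F'$ with $\hat w_{F'}=w_0s_1$, namely $F'=\{(3,1),(3,2)\}$, yet $\NE_{2,1}=1$.
\end{itemize}
So your local lemma (a string head satisfies all NE-equalities on row $i$) is false: $\NE_{i,j}$ counts letters $i$ in row $j$, and these feed $\varphi_i$, not $\varepsilon_i$. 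The variant with $\NE_{i+1,j}$ also fails. The tableau with rows $1\,2$ and $3$ is killed by $e_2$, since its $3$ is bracketed by the $2$ in row one, but it has $\NE_{3,2}=1$.

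Next consider the $2$-string $T_0\to T_1\to T_2$, where $T_1$ has rows $1\,3$ and $2$, and $T_2$ has rows $1\,3$ and $3$. It lowers two different entries of row two of the pattern: $(2,1)\to(1,1)\to(1,0)$. The reduced faces for $w_0s_2s_1=s_2$ are $\{(2,1)\}$ and $\{(3,2)\}$. $T_0$ lies only in the second, $T_2$ only in the first, and $(2,1)\notin F'$. So the string is not contained in any face obtained from $F'$ by deleting one edge; it passes into a face carrying a \emph{new} equation. Also, the edges adjacent to row $i$ are $(i,j)$ and $(i+1,j)$, with labels $s_{i-j}$ and $s_{i+1-j}$, so there is no single diagonal labelled $s_{n-i}$ to delete from.

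Your fallback does not close the gap either. The inclusion $\Upsilon(\GT_{\mathbb Z}(\lambda,w_0w))\subseteq\mathfrak{B}_w(\lambda)$ is one half of exactly the same string-versus-face analysis, and you give no argument for it. Moreover, the cardinality input you would quote has to be an identity for the lattice points of a \emph{union} of overlapping faces, not just Kogan's description of the individual faces.

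The right starting point is that row $j$ of $\Upsilon(X)$ contributes the block $(i+1)^{x_{i+1,j}-x_{i,j}}\,i^{x_{i,j}-x_{i-1,j}}$ to the $\{i,i+1\}$-subword of the reverse row word (entries outside the triangle read as $0$). Its exponents are the NE-differences $\NE_{i+1,j}$ and $\NE_{i,j}$ whenever these are defined, not the SE-type differences you wrote. Thus Kogan equations are exactly vanishing blocks in the $i$-signature. From there you must show that the union over all reduced $F$ with $\hat w_F=w_0w$ saturates onto the union for $w_0s_iw$, tracking how the lexicographic words $w_F$ change as a string moves between faces. That is the missing argument, and it is what the cited reference supplies.
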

\section{Hive}
\label{sec:hive}
An {\em array} $h$ of order $n$ is a triangular arrangement of real numbers, denoted by $(h_{i j})_{0 \leq j \leq i \leq n}$, where each number labels a specific vertex in an equilateral triangular graph. For $n = 4$ the arrangement is shown below:
$$
\begin{tikzpicture}[scale=1.5]
    \node at (0,0) {$\bullet$};
    \node at (1,0) {$\bullet$};
    \node at (2,0) {$\bullet$};
    \node at (3,0) {$\bullet$};
    \node at (4,0) {$\bullet$};
    \node at (0.5,0.5*1.732) {$\bullet$};
    \node at (1.5,0.5*1.732) {$\bullet$};
    \node at (2.5,0.5*1.732) {$\bullet$};
    \node at (3.5,0.5*1.732) {$\bullet$};
    \node at (1,1.732) {$\bullet$};
    \node at (2,1.732) {$\bullet$};
    \node at (3,1.732) {$\bullet$};
    \node at (1.5,1.5*1.732) {$\bullet$};
    \node at (2.5,1.5*1.732) {$\bullet$};
    \node at (2,2*1.732) {$\bullet$};
    \node at (2,2.13*1.732) {$h_{0,0}$};
    \node at (1.2,1.5*1.732) {$h_{1,0}$};
    \node at (2.8,1.5*1.732) {$h_{1,1}$};
    \node at (1-0.3,1.732) {$h_{2,0}$};
    \node at (2,1.732+0.4) {$h_{2,1}$};
    \node at (3+0.3,1.732) {$h_{2,2}$};
    \node at (0.2,0.5*1.732) {$h_{3,0}$};
    \node at (1.5,0.5*1.732+0.4) {$h_{3,1}$};
    \node at (2.5,0.5*1.732+0.4) {$h_{3,2}$};
    \node at (4-0.2,0.5*1.732) {$h_{3,3}$};
    \node at (0.1,-0.3) {$h_{4,0}$};
    \node at (1.1,-0.3) {$h_{4,1}$};
    \node at (2.1,-0.3) {$h_{4,2}$};
    \node at (3.1,-0.3) {$h_{4,3}$};
    \node at (4.1,-0.3) {$h_{4,4}$};
    \draw (0,0) -- (4,0);
    \draw (0.5, 0.5*1.732) -- (3.5, 0.5*1.732);
    \draw (1, 1.732) -- (2, 1.732) -- (3, 1.732);
    \draw (1.5, 1.5*1.732) -- (2.5, 1.5*1.732);
    \draw (0,0) -- (0.5, 0.5*1.732) -- (1, 1.732) -- (1.5, 1.5*1.732) -- (2, 2*1.732);
    \draw (1,0) -- (1.5, 0.5* 1.732) -- (2, 1.732) -- (2.5, 1.5*1.732);
    \draw (2,0) -- (2.5, 0.5* 1.732) -- (3, 1.732);
    \draw (3,0) -- (3.5, 0.5* 1.732);
    \draw (1,0) -- (0.5, 0.5*1.732);
    \draw (2,0) -- (1.5, 0.5*1.732) -- (1,1.732);
    \draw (3,0) -- (2.5, 0.5*1.732);
    \draw (2.5, 0.5*1.732) -- (2, 1.732) -- (1.5, 1.5*1.732);
    \draw (4,0) -- (3.5, 0.5*1.732);
    \draw (3.5, 0.5*1.732) -- (3, 1.732) -- (2.5, 1.5*1.732) -- (2, 2*1.732);
\end{tikzpicture}
$$
An array $h$ of order $n$ is said to be an {\em $n$-hive} if it satisfies certain hive conditions, which are constraints applied to the vertex labels of each elementary rhombus (formed by two adjacent triangles). These elementary rhombi come in three distinct orientations as shown below.
$$ 
\begin{tikzpicture}[scale=1.5]
  \draw (0,0) -- (1,0);
  \draw (0,0) -- (0.5, 0.5*1.732);
  \draw (0.5, 0.5*1.732) -- (1.5, 0.5*1.732);
  \draw (1,0) -- (1.5,0.5*1.732);
  \node at (0.5,-0.5) {$R^{\NE}$};
  \node at (-0.1,-0.1) {$a$};
  \node at (0.5-0.1, 0.5*1.732+0.1) {$b$};
  \node at (1.5+0.1, 0.5*1.732+0.05) {$c$};
  \node at (1+0.05,0-0.1) {$d$};
\end{tikzpicture}   \quad \quad \quad \quad 
\begin{tikzpicture}[scale=1.5]
  \draw (0,0) -- (1,0);
  \draw (0,0) -- (-0.5, 0.5*1.732);
  \draw (-0.5, 0.5*1.732) -- (0.5, 0.5*1.732);
  \draw (1,0) -- (0.5,0.5*1.732);
  \node at (0.5,-0.5) {$R^{\SE}$};
  \node at (-0.1,-0.1) {$b$};
  \node at (-0.5-0.1, 0.5*1.732+0.1) {$a$};
  \node at (0.5+0.1, 0.5*1.732+0.1) {$d$};
  \node at (1+0.05,0-0.1) {$c$};
\end{tikzpicture}   \quad \quad \quad \quad 
\begin{tikzpicture}[scale=1.5]
  \draw (0,0) -- (0.5,0.5*1.732);
  \draw (1,0) -- (0.5,0.5*1.732);
  \draw (0.5,-0.5*1.732) -- (0,0);
  \draw (0.5,-0.5*1.732) -- (1,0);
  \node at (0.5,-0.5*1.732-0.5) {$R^{\VE}$};
  \node at (-0.1,0) {$b$};
  \node at (0.5,0.5*1.732+0.15) {$a$};
  \node at (1+0.1,0) {$d$};
  \node at (0.5,-0.5*1.732-0.15) {$c$};
\end{tikzpicture}.$$
In each case, the vertex labeling above yields the hive condition $b + d \geq a + c$. Furthermore, an $n$-hive is classified as an integer hive if all its entries are integers.

Fix partitions $\lambda,\mu,\nu \in \Par_n $ such that $|\lambda| + |\mu| = |\nu|$. We then define $\VHive(\lambda,\mu,\nu)$ by the set of all hives $h$ satisfying $h_{0,0}=0,$ along with the following boundary conditions for $1 \leq i \leq n$:
\begin{itemize}
    \item $h_{i,0}=\lambda_1 + \cdots +\lambda_i,$
    \item $h_{n,i}=|\lambda|+ \mu_1 + \cdots + \mu_i,$
    \item $h_{i,i}=\nu_1 + \cdots + \nu_i$.
\end{itemize}
\begin{theorem}
\begin{upshape}
    \cite{Buch:saturation, Mrigendra:saturation}
\end{upshape}
$c_{\lambda,\mu}^{\nu}$ is the cardinality of $\VHive_{\mathbb{Z}}(\lambda,\mu,\nu),$ where $$\VHive_{\mathbb{Z}}(\lambda,\mu,\nu):=\VHive(\lambda,\mu,\nu) \cap \mathbb{Z}^{(n+1)(n+2)/2}.$$      
\end{theorem}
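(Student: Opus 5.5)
The plan is to build an explicit bijection between the set $\SSYT_{\lambda,\mu}^{\nu}$ of Corollary~\ref{cor:LR-rule} and $\VHive_{\mathbb{Z}}(\lambda,\mu,\nu)$. Let $T\in\SSYT(\mu)$ with $r_{T^{\lambda}}*r_T$ dominant of weight $\nu$. Then $T$ has content $\nu-\lambda$. For $1\le s\le r\le n$, let $a_{r,s}$ be the number of entries equal to $r$ in row $s$ of $T$; semistandardness forces $a_{r,s}=0$ for $r<s$. I would define, for $0\le j\le i\le n$,
$$h_{i,j}:=\lambda_1+\cdots+\lambda_i+\sum_{s\le j}\sum_{r\le i}a_{r,s},$$
that is, $\lambda_1+\cdots+\lambda_i$ plus the number of entries $\le i$ lying in the first $j$ rows of $T$.

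The boundary conditions are then immediate. We have $h_{0,0}=0$ and $h_{i,0}=\lambda_1+\cdots+\lambda_i$. Next, $h_{n,j}=|\lambda|+\mu_1+\cdots+\mu_j$, because every entry is $\le n$ and row $s$ has length $\mu_s$. Finally, $h_{i,i}=\lambda_1+\cdots+\lambda_i+\#\{\text{entries}\le i\}=\nu_1+\cdots+\nu_i$, because entries $\le i$ can only occur in rows $\le i$, and the content of $T$ is $\nu-\lambda$. Integrality is clear.

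The core of the proof is to check that the three families of rhombus inequalities are exactly the defining conditions on $T$. Each elementary rhombus inequality $b+d\ge a+c$ is a mixed second difference of $h$, and it rewrites as a linear inequality in the $a_{r,s}$. I expect the correspondence to be as follows.
\begin{itemize}
\item One orientation yields $a_{r,s}\ge 0$.
\item A second orientation yields the column-strictness (interlacing) conditions. These say that the entries $\le r$ in row $s+1$ number at most the entries $\le r-1$ in row $s$, which is the GT-pattern description of $T$ via $\Upsilon$ recalled in Section~\ref{Sec:3}.
\item The third orientation yields the lattice condition.
\end{itemize}
For the lattice condition, recall that the reverse row word reads each row from right to left, so within row $s$ all copies of $r+1$ are read before any copy of $r$. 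Hence the critical prefixes end just after the $(r+1)$'s of some row $s$, and dominance of $r_{T^{\lambda}}*r_T$ is equivalent to
$$\lambda_{r+1}+\sum_{t\le s}a_{r+1,t}\;\le\;\lambda_r+\sum_{t\le s-1}a_{r,t}\qquad\text{for all } r,s.$$
This is a single rhombus inequality for $h$ (with the $\lambda$ terms supplied by the left boundary), and the prefix $r_{T^\lambda}$ alone is dominant since $\lambda$ is a partition.

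For the inverse map, I would recover $a_{r,s}$ from an integer hive as the mixed difference $h_{r,s}-h_{r,s-1}-h_{r-1,s}+h_{r-1,s-1}$. The same dictionary shows that these numbers are nonnegative integers and assemble into a semistandard tableau of shape $\mu$ with the lattice property and content $\nu-\lambda$. The two maps are visibly mutually inverse, since $h$ is determined by its second differences together with the boundary values. The main obstacle is bookkeeping: I must make sure that every elementary rhombus, including those touching the boundary where some $a_{r,s}$ are forced to vanish, matches precisely one of the three tableau conditions, and that no extra or missing condition appears. I would handle this by computing each rhombus type at a generic interior position first and then checking the boundary rows and diagonals separately.
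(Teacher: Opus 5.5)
Your proposal is correct and uses the same bijection the paper relies on. The paper only cites this statement, but in the proof of Theorem~\ref{thm:main} it uses $h_{i,j}=\sum_{k\le i}\lambda_k+\sum_{k\le j}t_{i,k}$ from \cite[Proposition 5.2]{Mrigendra:saturation}, where $t_{i,k}=\sum_{r\le i}a_{r,k}$ is the GT pattern $\Upsilon^{-1}(T)$; then $R^{\NE}_{i,j}(h)=a_{i,j}$, the $\SE$ rhombi give $t_{i-1,j}\ge t_{i,j+1}$, and the $\VE$ rhombi give exactly your lattice inequalities for $1\le s\le r\le n-1$.

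The match between rhombi and tableau conditions is not quite one-to-one, so two cases need a separate argument. First, $a_{r,r}\ge 0$ is not an $\NE$ rhombus; it follows from the chain $t_{r,r}\ge t_{r+1,r+1}\ge\cdots\ge t_{n,n}=\mu_n\ge 0$. Second, the lattice inequalities with $s>r$ correspond to no rhombus; they reduce to $\nu_{r+1}\le\nu_r$ once the boundary values force the content to be $\nu-\lambda$.
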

\subsection{Hive Kogan face} 
\label{sec:hive-kogan}
For $1 \leq j < i \leq n,$ we consider the following rhombus 
$$
\begin{tikzpicture}[scale=1.5]
  \draw (0,0) -- (1,0);
  \draw (0,0) -- (0.5, 0.5*1.732);
  \draw (0.5, 0.5*1.732) -- (1.5, 0.5*1.732);
  \draw (1,0) -- (1.5,0.5*1.732);
  \draw (0.5, 0.5*1.732+0.2) node {$h_{i-1,j-1}$};
  \draw (1.5+0.3, 0.5*1.732+0.2) node {$h_{i-1,j}$};
  \draw (0,-0.2) node {$h_{i,j-1}$};
  \draw (1.2,-0.2) node {$h_{i,j}$};
  \node at (0.5,-0.6) {$R^{\NE}_{i,j}$};
\end{tikzpicture}.
$$
in an $n$-hive $h$. Then we define $$R^{\NE}_{i,j}(h):=h_{i,j}+h_{i-1,j-1}-h_{i,j-1}-h_{i-1,j}= (h_{i,j}-h_{i,j-1})-(h_{i-1,j}-h_{i-1,j-1}),$$
and by definition $R^{\NE}_{i,j}(h) \geq 0$.

Fix $\lambda,\mu,\nu \in \Par_n $ with $|\lambda| + |\mu| = |\nu|$, and consider a subset $F \subseteq \{(i,j): 1 \leq j < i \leq n \} $. We then define the {\em Hive Kogan face} \cite[\S5.3]{Mrigendra:saturation} of $ \VHive(\lambda,\mu,\nu)$ associated to $F$ as follows
\begin{equation}
    \label{eq:hive-kogan}
\VHive(\lambda,\mu,\nu,F):=\{ h \in \VHive(\lambda,\mu,\nu): R^{\NE}_{i,j}(h)=0 \quad \forall (i,j) \in F \}.
\end{equation}
For $w \in S_n,$ we define
$$\VHive(\lambda,\mu,\nu,w):= \bigcup_{\substack{F: F \text{ is reduced},\\ \hat{w}_F=w}} \VHive(\lambda,\mu,\nu,F),$$
where $\hat{w}_F$ is defined in \ref{sec:Kogan}.
\begin{theorem}
\begin{upshape}
 \cite[Theorem 5.4]{Mrigendra:saturation}   
\end{upshape}
 Let $w \in S_n$ and $\lambda,\mu,\nu \in \Par_n$ such that $|\lambda|+|\mu|=|\nu|$. Then $c_{\lambda,\mu}^{\nu}(w)$ is the cardinality of $\VHive_{\mathbb{Z}}(\lambda,\mu,\nu,ww_0),$ where $w_0$ is the longest permutation in $S_n$.   
\end{theorem}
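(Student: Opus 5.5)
The plan is to deduce this from Theorem~\ref{thm:joseph} and the Proposition from \cite{Fujita}. The idea is to build an explicit bijection between integer hives in $\VHive_{\mathbb{Z}}(\lambda,\mu,\nu)$ and integer GT patterns with bottom row $\mu$ whose associated tableau lies in $\SSYT^{\nu}_{\lambda,\mu}$. This bijection should carry the hive NE-rhombus functionals $R^{\NE}_{i,j}$ exactly onto the GT functionals $\NE_{i,j}$, so that Kogan faces correspond to hive Kogan faces with the \emph{same} index set $F$.

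\textbf{Step 1 (the map).} For $h\in\VHive_{\mathbb{Z}}(\lambda,\mu,\nu)$, set $x_{i,j}:=h_{i,j}-h_{i,j-1}$ for $1\le j\le i\le n$. The boundary condition $h_{n,j}=|\lambda|+\mu_1+\cdots+\mu_j$ gives $x_{n,j}=\mu_j$. By definition,
\[
x_{i,j}-x_{i-1,j}=R^{\NE}_{i,j}(h),
\]
so $\NE_{i,j}(X)=R^{\NE}_{i,j}(h)$ for all $1\le j<i\le n$. The SE inequality $x_{i-1,j}-x_{i,j+1}\ge 0$ is the hive condition for one of the other two rhombus orientations, and I will check this directly. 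The map $h\mapsto X$ is injective, because $h$ is recovered from $X$ together with the known left boundary $h_{i,0}=\lambda_1+\cdots+\lambda_i$.

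\textbf{Step 2 (the image).} The remaining rhombus inequalities, together with the right boundary $h_{i,i}=\nu_1+\cdots+\nu_i$, should translate into exactly the statement that $r_{T^\lambda}*r_{\tilde X}$ is dominant of weight $\nu$, where $\tilde X=\Upsilon(X)$. Here the row sums of $X$ read off the content of $\tilde X$, and the inequalities become the lattice-word inequalities row by row. This is the classical hive/LR-tableau correspondence underlying the equality $c^\nu_{\lambda,\mu}=|\VHive_{\mathbb{Z}}(\lambda,\mu,\nu)|$, so I take it as known and only verify that it is realized by the map in Step~1.

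\textbf{Step 3 (faces).} Since $R^{\NE}_{i,j}(h)=\NE_{i,j}(X)$, the map of Step~1 restricts, for each reduced $F$, to a bijection from $\VHive_{\mathbb{Z}}(\lambda,\mu,\nu,F)$ onto the integer points of $\GT(\mu,F)$ whose tableau lies in $\SSYT^\nu_{\lambda,\mu}$. Taking the union over reduced $F$ with $\hat w_F=ww_0$ identifies $\VHive_{\mathbb{Z}}(\lambda,\mu,\nu,ww_0)$ with $\{X\in\GT_{\mathbb{Z}}(\mu,ww_0):\Upsilon(X)\in\SSYT^\nu_{\lambda,\mu}\}$. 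The Proposition then converts $\GT_{\mathbb{Z}}(\mu,w_0w')$ into $\mathfrak B_{w'}(\mu)$, and Theorem~\ref{thm:joseph} finishes the count.

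\textbf{Main obstacle.} The hard part is the bookkeeping of Weyl group elements. Naively, $w_0w'=ww_0$ forces $w'=w_0ww_0$ rather than $w'=w$. So either the correct hive-to-GT dictionary involves a reflection of the indices (e.g.\ reading differences along the other diagonal direction, or using the $\nu$-side boundary), which conjugates $w_F$ by $w_0$, or one must pass through the Schützenberger involution together with the twist $\lambda\leftrightarrow\mu$. I would first recompute the dictionary on small cases ($n=2,3$, $w=s_1$) to pin down which orientation makes $\hat w_F$ match. I would then redo Step~1 with that orientation, keeping the identity $R^{\NE}=\NE$ up to the corresponding relabeling $(i,j)\mapsto(i,i-j)$ or similar. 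Once the index convention is fixed, the rest is formal.
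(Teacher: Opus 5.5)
\paragraph{The final step fails.} The paper gives no proof of this statement; it quotes it from \cite[Theorem 5.4]{Mrigendra:saturation}. Your Steps 1--3 are sound. Step 2 is the correspondence the paper imports as \cite[Proposition 5.2]{Mrigendra:saturation}. Step 1 is exactly the dictionary displayed in the proof of Theorem~\ref{thm:main}: $h_{i,j}=\sum_{k\le i}\lambda_k+\sum_{k\le j}t_{i,k}$ with $R^{\NE}_{i,j}(h)=\NE_{i,j}(\Upsilon^{-1}(T))$, and there is no reflection of indices. The gap is the final step, and it cannot be closed the way you propose.

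Steps 1--3, together with the Proposition and Theorem~\ref{thm:joseph}, already give $|\VHive_{\mathbb{Z}}(\lambda,\mu,\nu,u)|=c^{\nu}_{\lambda,\mu}(w_0u)$ for every $u\in S_n$. Hence $|\VHive_{\mathbb{Z}}(\lambda,\mu,\nu,ww_0)|=c^{\nu}_{\lambda,\mu}(w_0ww_0)$. The set $\VHive_{\mathbb{Z}}(\lambda,\mu,\nu,ww_0)$ is defined intrinsically by the vanishing of the $R^{\NE}_{i,j}$. So no other choice of bijection (reflected indices, the $\nu$-side boundary, Sch\"utzenberger's involution) can change its cardinality. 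The symmetries that genuinely conjugate $w$ by $w_0$, such as $c^{\nu}_{\lambda,\mu}(w)=c^{\nu^*}_{\lambda^*,\mu^*}(w_0ww_0)$ with $\lambda^*=-w_0\lambda$ (from the Chevalley involution), replace the triple $(\lambda,\mu,\nu)$ by its dual. The mismatch you found is therefore not a bookkeeping issue.

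\paragraph{The statement fails under the paper's conventions.} With the conventions as stated ($w_F$ the product in lexicographic order, $\hat w_F=w_0w_Fw_0$), the identity with $ww_0$ is false. Take $n=3$, $\lambda=\mu=(1,0,0)$, $\nu=(1,1,0)$.
\begin{itemize}
\item The only interior entry is $h_{2,1}$. The conditions $R^{\NE}_{3,1}(h)=2-h_{2,1}\ge 0$ and $R^{\NE}_{3,2}(h)=h_{2,1}-2\ge 0$ force $h_{2,1}=2$. So the unique hive has $R^{\NE}_{2,1}=1$ and $R^{\NE}_{3,1}=R^{\NE}_{3,2}=0$.
\item For $w=s_1$ we have $s_1w_0=s_2s_1$. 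The only reduced $F$ with $\hat w_F=s_2s_1$ (i.e.\ $w_F=s_1s_2$) is $F=\{(2,1),(3,1)\}$. So $\VHive_{\mathbb{Z}}(\lambda,\mu,\nu,s_1w_0)=\emptyset$. Yet $K(\lambda,s_1,\mu)=\U\gl_3(e_1\otimes e_2)$ is all of $\mathbb{C}^3\otimes\mathbb{C}^3$, so $c^{\nu}_{\lambda,\mu}(s_1)=1$.
\item For $w=s_2$, $K(\lambda,s_2,\mu)=\U\gl_3(e_1\otimes e_1)$ is the symmetric square, so $c^{\nu}_{\lambda,\mu}(s_2)=0$. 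But $|\VHive_{\mathbb{Z}}(\lambda,\mu,\nu,s_2w_0)|=1$ via $F=\{(3,1),(3,2)\}$.
\end{itemize}

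What your argument actually proves is $c^{\nu}_{\lambda,\mu}(w)=|\VHive_{\mathbb{Z}}(\lambda,\mu,\nu,w_0w)|$, which agrees with this example. Reaching the form with $ww_0$ would require changing the convention for $\hat w_F$ to match whatever \cite{Mrigendra:saturation} and \cite{Fujita} actually use, not changing the hive-to-GT map. So your claim that ``once the index convention is fixed, the rest is formal'' does not hold. Under the definitions given in this paper, the statement as written cannot be proved.
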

An {\em edge-labeled $n$-hive} is a graph in the shape of an equilateral triangle whose edges are labeled with real numbers (see Figure~\ref{fig:hive-edge}), satisfying the following conditions:
\begin{itemize}
    \item \textbf{Rhombus condition:} For each rhombus in Figure~\ref{fig:Rhombus}, the edge labels satisfy $p \geq r $ and $ q \geq s$.
    \item \textbf{Triangle condition:} In every triangle, the sum of the labels on the two oblique sides equals the label on the horizontal side.
\end{itemize}
\begin{figure}
    \centering
\begin{tikzpicture}[scale=2]
    \node at (1.5+0.1,1.8*1.732) {$a_{1,1}$};
    \node at (1+0.1,1.3*1.732) {$a_{2,1}$};
    \node at (1+1.15,1.3*1.732) {$a_{2,2}$};
    \node at (0.5+0.1,0.8*1.732) {$a_{3,1}$};
    \node at (0.5+1.15,0.8*1.732) {$a_{3,2}$};
    \node at (0.5+1.15+1,0.8*1.732) {$a_{3,3}$};
    \node at (0+0.1,{(0.4*1.732)-0.15}) {$a_{4,1}$};
    \node at (0+1.15,{(0.4*1.732)-0.15}) {$a_{4,2}$};
    \node at (0+1.15+1,{(0.4*1.732)-0.15}) {$a_{4,3}$};
    \node at (0+1.15+1+1,{(0.4*1.732)-0.15}) {$a_{4,4}$};
    
    \node at (2,1.45*1.732) {$c_{1,1}$};
    \node at (1.6,1.732-0.1) {$c_{2,1}$};
    \node at (1.6+1,1.732-0.1) {$c_{2,2}$};
    \node at (1,0.5*1.732-0.1) {$c_{3,1}$};
    \node at (1+1,0.5*1.732-0.1) {$c_{3,2}$};
    \node at (1+2,0.5*1.732-0.1) {$c_{3,3}$};
    \node at (0.5,-0.1) {$c_{4,1}$};
    \node at (0.5+1,-0.1) {$c_{4,2}$};
    \node at (0.5+2,-0.1) {$c_{4,3}$};
    \node at (0.5+3,-0.1) {$c_{4,4}$};

    \node at (2.5-0.1,1.8*1.732) {$b_{1,1}$};
    \node at (3-0.05,1.25*1.732) {$b_{2,2}$};
    \node at (3-1.4,1.25*1.732) {$b_{2,1}$};
    \node at (3.5-0.05,0.75*1.732) {$b_{3,3}$};
    \node at (3.5-1.4,0.75*1.732) {$b_{3,2}$};
    \node at (3.5-2.4,0.75*1.732) {$b_{3,1}$};
    \node at (4.1-0.15,{0.35*1.732-0.15}) {$b_{4,4}$};
    \node at (4.1-1.5,{0.35*1.732-0.15}) {$b_{4,3}$};
    \node at (4.1-2.5,{0.35*1.732-0.15}) {$b_{4,2}$};
    \node at (4.1-3.5,{0.35*1.732-0.15}) {$b_{4,1}$};

    \draw (0,0) -- (4,0);
    \draw (0.5, 0.5*1.732) -- (3.5, 0.5*1.732);
    \draw (1, 1.732) -- (2, 1.732) -- (3, 1.732);
    \draw (1.5, 1.5*1.732) -- (2.5, 1.5*1.732);
    \draw (0,0) -- (0.5, 0.5*1.732) -- (1, 1.732) -- (1.5, 1.5*1.732) -- (2, 2*1.732);
    \draw (1,0) -- (1.5, 0.5* 1.732) -- (2, 1.732) -- (2.5, 1.5*1.732);
    \draw (2,0) -- (2.5, 0.5* 1.732) -- (3, 1.732);
    \draw (3,0) -- (3.5, 0.5* 1.732);
    \draw (1,0) -- (0.5, 0.5*1.732);
    \draw (2,0) -- (1.5, 0.5*1.732) -- (1,1.732);
    \draw (3,0) -- (2.5, 0.5*1.732);
    \draw (2.5, 0.5*1.732) -- (2, 1.732) -- (1.5, 1.5*1.732);
    \draw (4,0) -- (3.5, 0.5*1.732);
    \draw (3.5, 0.5*1.732) -- (3, 1.732) -- (2.5, 1.5*1.732) -- (2, 2*1.732);
\end{tikzpicture}
    \caption{An edge-labeled $4$-hive}
    \label{fig:hive-edge}
\end{figure}
\begin{figure}
    \centering
\begin{tikzpicture}[scale=1.5]
  \draw (0,0) -- (1,0);
  \draw (0,0) -- (0.5, 0.5*1.732);
  \draw (0.5, 0.5*1.732) -- (1.5, 0.5*1.732);
  \draw (1,0) -- (1.5,0.5*1.732);
  \node at (0.5,-0.5) {$R^{\NE}$};
  \node at (0.15-0.1,0.25*1.732) {$p$};
  \node at (1.325+0.1,0.408) {$r$};
  \node at (0.475,-0.1-0.1) {$q$};
  \node at (1,0.941+0.1) {$s$};
\end{tikzpicture}
\quad \quad \quad \quad 
\begin{tikzpicture}[scale=1.5]
  \draw (0,0) -- (1,0);
  \draw (0,0) -- (-0.5, 0.5*1.732);
  \draw (-0.5, 0.5*1.732) -- (0.5, 0.5*1.732);
  \draw (1,0) -- (0.5,0.5*1.732);
  \node at (0.5,-0.5) {$R^{\SE}$};
  \node at (0,0.5*1.732+0.1+0.1) {$p$};
  \node at (-0.3-0.15,0.25*1.732) {$q$};
  \node at (0.475,-0.1-0.1) {$r$};
  \node at (0.825+0.1,0.25*1.732) {$s$};
\end{tikzpicture}
\quad \quad \quad \quad 
\begin{tikzpicture}[scale=1.5]
  \draw (0,0) -- (0.5,0.5*1.732);
  \draw (1,0) -- (0.5,0.5*1.732);
  \draw (0.5,-0.5*1.732) -- (0,0);
  \draw (0.5,-0.5*1.732) -- (1,0);
  \node at (0.5,-0.5*1.732-0.5) {$R^{\VE}$};
  \node at (0.8+0.1,0.25*1.732+0.15/2) {$p$};
  \node at (0.8+0.1,-0.25*1.732-0.15/2) {$q$};
  \node at (0.2-0.1,-0.25*1.732-0.15/2) {$r$};
  \node at (0.2-0.1,0.25*1.732+0.15/2) {$s$};
\end{tikzpicture}.
    \caption{Rhombus inequalities}
    \label{fig:Rhombus}
\end{figure} 
We can easily switch from a vertex-labeled $n$-hive to an edge-labeled $n$-hive as follows. Let $h=(h_{i,j})_{ 0 \leq j \leq i \leq n }$ be a vertex-labeled $n$-hive. Then the corresponding edge-labeled $n$-hive $h(a,b,c)$ is given by:
\begin{equation}
    \label{eq:ver-to-edge}
    a_{i,j}=h_{i,j-1}-h_{i-1,j-1}, b_{i,j}= h_{i,j}-h_{i-1,j-1}, c_{i,j}=h_{i,j}-h_{i,j-1}.
\end{equation}
Conversely, we can switch from an edge-labeled $n$-hive to a vertex-labeled $n$-hive using \eqref{eq:ver-to-edge}. Fix $\lambda,\mu,\nu \in \Par_n $ with $|\lambda| + |\mu| = |\nu|$. We then define $\EHive_{\mathbb{Z}}(\lambda,\mu,\nu)$ to be the set of all integer edge-labeled hives $h(a,b,c)$ satisfying the boundary conditions $a_{i,1}=\lambda_i$, $b_{i,i}=\nu_i$, and $c_{n,i}=\mu_i$ for $1 \leq i \leq n$. This yields the following bijection.
\begin{proposition}
\label{prop:ve}
Fix $\lambda,\mu,\nu \in \Par_n$ with $|\lambda| + |\mu| = |\nu|$. Then the map $$\Gamma: \VHive_{\mathbb{Z}}(\lambda,\mu,\nu) \rightarrow \EHive_{\mathbb{Z}}(\lambda,\mu,\nu), h \mapsto h(a,b,c)$$ is a bijection. 
\end{proposition}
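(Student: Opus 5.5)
We need to prove Proposition~\ref{prop:ve}: the map $\Gamma$ from integer vertex hives with boundary $(\lambda,\mu,\nu)$ to integer edge-labeled hives with the same boundary is a bijection. The plan has three steps. First, check that $\Gamma$ lands in $\EHive_{\mathbb{Z}}(\lambda,\mu,\nu)$. Second, construct an explicit inverse $\Gamma^{-1}$. Third, check that the two compositions are the identity.

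\textbf{Well-definedness.} Integrality of the edge labels is clear from \eqref{eq:ver-to-edge}.

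For the triangle condition, consider an upward triangle with vertices $h_{i-1,j-1}$, $h_{i,j-1}$, $h_{i,j}$. Its two oblique sides carry $a_{i,j}=h_{i,j-1}-h_{i-1,j-1}$ and $c_{i,j}=h_{i,j}-h_{i,j-1}$. Their sum telescopes to $h_{i,j}-h_{i-1,j-1}=b_{i,j}$. The downward triangles are handled the same way, because every edge label is a difference of its endpoint labels with a fixed orientation. Hence the labels around any triangle satisfy a telescoping identity of the required form.

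For the rhombus conditions, take each of the three rhombi of Figure~\ref{fig:Rhombus}. Write the difference $p-r$ (respectively $q-s$) via \eqref{eq:ver-to-edge} as a signed sum of the four vertex labels. By design it equals $b+d-a-c$ for the corresponding rhombus. For instance, $R^{\NE}_{i,j}(h)=(h_{i,j}-h_{i,j-1})-(h_{i-1,j}-h_{i-1,j-1})$ is a difference of two parallel horizontal edges, equivalently of two parallel oblique edges. So the vertex hive inequalities translate exactly into the inequalities $p\ge r$ and $q\ge s$.

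For the boundary, we compute directly:
\[
a_{i,1}=h_{i,0}-h_{i-1,0}=\lambda_i,\qquad b_{i,i}=h_{i,i}-h_{i-1,i-1}=\nu_i,\qquad c_{n,i}=h_{n,i}-h_{n,i-1}=\mu_i.
\]
The last uses $h_{n,0}=|\lambda|$ for $i=1$.

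\textbf{Inverse.} Given an edge-labeled hive $(a,b,c)$, set $h_{0,0}=0$ and define
\[
h_{i,j}=\sum_{k\le i} a_{k,1}+\sum_{l\le j} c_{i,l}.
\]
That is, walk down the left boundary and then along row $i$. We must show this is consistent with every edge relation in \eqref{eq:ver-to-edge}. Since the triangular graph is simply connected and built from triangles, path-independence of these sums is equivalent to the vanishing of the signed sum around each elementary triangle. That vanishing is precisely the triangle condition.

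With consistency in hand, the remaining checks go as follows. The rhombus conditions give back the vertex hive inequalities by the same computation as above. The boundary values $h_{i,0}$, $h_{n,i}$ and $h_{i,i}$ are recovered by summing $\lambda$, $\mu$ and $\nu$; here $h_{i,i}=\sum\nu_k$ uses path-independence along the right boundary edges $b_{k,k}$. Integrality is preserved, and $|\lambda|+|\mu|=|\nu|$ makes the corner $h_{n,n}$ consistent.

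\textbf{Compositions.} Both composites are the identity: a vertex labeling with $h_{0,0}=0$ is determined by its edge differences, and conversely.

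\textbf{Main obstacle.} The only delicate point is bookkeeping. For each of the three rhombus orientations we must match which edges play the roles of $p,q,r,s$ in Figure~\ref{fig:Rhombus}, and verify that $p-r$ and $q-s$ are both equal to the single vertex expression $b+d-a-c$. Two parallel pairs of edges in a rhombus give the same difference precisely because of the triangle condition. I would carry this out explicitly for $R^{\NE}_{i,j}$ and note that the other two orientations follow by the analogous computation.
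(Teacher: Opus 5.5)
Your strategy is exactly what the paper tacitly relies on: edge labels are differences of vertex labels, and the inverse is given by path sums made well defined by the triangle condition. The paper offers no argument beyond saying one can switch back using \eqref{eq:ver-to-edge}. However, your well-definedness step fails as you state it, and it fails precisely in the bookkeeping you deferred.

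\textbf{Triangle condition.} In the upward triangle on $h_{i-1,j-1},h_{i,j-1},h_{i,j}$, the edge $c_{i,j}=h_{i,j}-h_{i,j-1}$ is the horizontal one and $b_{i,j}$ is oblique. So what you actually proved is $a_{i,j}+c_{i,j}=b_{i,j}$, not the printed condition ``oblique $+$ oblique $=$ horizontal''. Since $a+c=b$ holds identically for $\Gamma(h)$, the printed condition $a+b=c$ would force every $a$-label, hence $\lambda$, to vanish.

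\textbf{Rhombus conditions.} In $R^{\NE}_{i,j}$ of Figure~\ref{fig:Rhombus} one has $p=a_{i,j}$, $r=a_{i,j+1}$, $q=c_{i,j}$, $s=c_{i-1,j}$. Your own identity then gives $q-s=R^{\NE}_{i,j}(h)=r-p$. In $R^{\VE}$ (top $h_{i-1,j-1}$, sides $h_{i,j-1},h_{i,j}$, bottom $h_{i+1,j}$) one has $p-r=b_{i,j}-b_{i+1,j}=a_{i,j}-a_{i+1,j+1}=s-q$. So in these two orientations the triangle relation makes the two parallel differences \emph{opposite}, not equal; only $R^{\SE}$ works as you claim. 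Consequently ``$p\ge r$ and $q\ge s$'' would force $R^{\NE}=R^{\VE}=0$.

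\textbf{A concrete failure.} The unique hive in $\VHive_{\mathbb{Z}}((1,0),(1,0),(1,1))$ has $a_{1,1}=b_{1,1}=1$, $c_{1,1}=0$, and $a_{2,1}=0<1=a_{2,2}$. Thus $\Gamma(h)$ violates both the printed triangle condition and the printed $R^{\NE}$ condition.

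The underlying issue is that the printed triangle and rhombus conditions are the ones for the opposite orientation of the $a$-edges, namely $a_{i,j}=h_{i-1,j-1}-h_{i,j-1}$. That clashes with \eqref{eq:ver-to-edge} and with $a_{i,1}=\lambda_i$. A correct proof has to say this and fix a convention first:
\begin{itemize}
\item keep \eqref{eq:ver-to-edge} and the boundary conditions;
\item take the triangle condition to be $b=a+c$, i.e.\ the NW--SE edge equals the sum of the other two;
\item reverse the inequality between the two $a$-type edges in $R^{\NE}$ and $R^{\VE}$, i.e.\ require $a_{i,j+1}\ge a_{i,j}$ and $a_{i,j}\ge a_{i+1,j+1}$.
\end{itemize}
With that definition your argument is complete: the rhombus inequalities are equivalent to the vertex hive inequalities, the partial sums are path-independent by the triangle condition on the simply connected triangulated region, the boundary is recovered, and both compositions are the identity. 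As written, though, the claim that $p-r$ and $q-s$ both equal $b+d-a-c$ ``by design'' is false. That is the one step you needed to actually carry out.
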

\section{proof of the main theorem}
The following result is the main theorem of this article.
\begin{theorem}
\label{thm:main}
    Let $\lambda,\mu,\nu \in \Par_n $ such that $|\lambda|+|\mu|=|\nu|$. Assume there exist indices $i,j,k$ such that $i+j+k=n$ and $\lambda_i+\mu_j>\nu_1 +\nu_{n-k+1}$. Suppose, moreover, that $\lambda_i >\lambda_{i+1}, \mu_i >\mu_{i+1}$ and $\nu_{n-k}>\nu_{n-k+1}$.    Then, we have, for 
    $w \in S_n$,
    $$c^{\nu}_{\lambda,\mu}(w)=c^{\nu -(1^{n-k})}_{\lambda-(1^i),\mu-(1^j)}(w).$$
\end{theorem}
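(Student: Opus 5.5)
We prove the statement through the hive Kogan face model. By \cite[Theorem 5.4]{Mrigendra:saturation}, both sides count integer points of hive Kogan faces: $c^{\nu}_{\lambda,\mu}(w)=|\VHive_{\mathbb{Z}}(\lambda,\mu,\nu,ww_0)|$, and the same holds for the reduced triple $(\lambda-(1^i),\mu-(1^j),\nu-(1^{n-k}))$. The set $\VHive_{\mathbb{Z}}(\lambda,\mu,\nu,ww_0)$ is a union, over reduced $F$ with $\hat w_F=ww_0$, of faces cut out by $R^{\NE}_{p,q}(h)=0$ for $(p,q)\in F$. It therefore suffices to construct a bijection
$$\Phi:\VHive_{\mathbb{Z}}(\lambda,\mu,\nu)\to \VHive_{\mathbb{Z}}(\lambda-(1^i),\mu-(1^j),\nu-(1^{n-k}))$$
satisfying $R^{\NE}_{p,q}(\Phi(h))=R^{\NE}_{p,q}(h)$ for all $1\le q<p\le n$. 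Such a $\Phi$ restricts to a bijection of each face $\VHive_{\mathbb{Z}}(\cdot,F)$, hence of their union, which proves the theorem for every $w$ at once.

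To build $\Phi$, pass to edge-labeled hives via Proposition~\ref{prop:ve}. Here $R^{\NE}_{p,q}(h)=c_{p,q}-c_{p-1,q}$ is a difference of horizontal labels in one column. We follow the hive-model proof of the second reduction formula \cite{Hive-reduction}.
\begin{enumerate}[(i)]
\item Use $\lambda_i+\mu_j>\nu_1+\nu_{n-k+1}$, together with the rhombus inequalities and triangle conditions, to show that every integer hive in $\EHive_{\mathbb{Z}}(\lambda,\mu,\nu)$ contains a distinguished path of edges, essentially forced. This path runs from the $\lambda$-boundary just below position $i$, to the $\mu$-boundary at position $j$, to the $\nu$-boundary at position $n-k$. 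Along it, certain labels are forced to be at least $1$, and certain rhombus inequalities are strict.
\item Define $\Phi$ by subtracting $1$ from the labels on a fixed set $S$ of edges. On the boundary, $S$ consists of the first $i$ edges of the $\lambda$-side, the first $j$ of the $\mu$-side and the first $n-k$ of the $\nu$-side. In the interior, $S$ is chosen as the union of regions bounded by the distinguished path so that every triangle condition survives. The strict inequalities $\lambda_i>\lambda_{i+1}$, $\mu_j>\mu_{j+1}$ and $\nu_{n-k}>\nu_{n-k+1}$ keep the reduced boundaries partitions. The strict inequalities from step (i) keep every rhombus inequality that crosses $\partial S$ valid after the shift.
\item The inverse adds $1$ back on $S$. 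Verifying that this also preserves hive conditions for the reduced triple uses the reduced form of the hypothesis, $(\lambda_i-1)+(\mu_j-1)\ge \nu_1+\nu_{n-k+1}-1$ after adjustment.
\end{enumerate}

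The crucial extra point beyond the classical case is that $\Phi$ must preserve every $R^{\NE}_{p,q}$, not just the hive inequalities. In vertex terms, $\Phi$ adds a function $\phi$ on vertices, and $R^{\NE}_{p,q}(\phi)=0$ for all NE rhombi means that $\phi_{p,q}-\phi_{p,q-1}$ depends only on $q$. I would first check whether the correction vector of \cite{Hive-reduction} has this property. The natural candidate is a piecewise-linear $\phi$ whose horizontal increments are constant down each column: the edges in $S$ would then consist of whole NE-parallel strips of horizontal edges, with the matching oblique edges adjusted accordingly.

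I expect the main obstacle to be reconciling this NE-invariance with the boundary data. The increments of $\phi$ along the bottom are fixed by $\mu-(1^j)$, and along the right side by $\nu-(1^{n-k})$. One must show that these are compatible with constant column differences, using the strict gaps to absorb the kinks where NE-rhombi cross the locus of nonlinearity. If the classical bijection fails NE-invariance, I would instead compose it with a hive symmetry, for example a rotation or reflection exchanging the roles of $\lambda,\mu,\nu$, chosen so that its nonlinearity lies only along SE- and VE-rhombi.
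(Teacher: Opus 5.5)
\textbf{There is a genuine gap: the invariant you reduce to cannot exist.} Assume $i,j\ge 1$. Then no hive in $\VHive_{\mathbb{Z}}(\lambda-(1^i),\mu-(1^j),\nu-(1^{n-k}))$ has the same NE-values as a hive in $\VHive_{\mathbb{Z}}(\lambda,\mu,\nu)$. So there is no map $\Phi$ preserving every $R^{\NE}_{p,q}$, whether a translation or not.

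The reason is that a hive is determined by its left and bottom boundaries together with its NE-values. Put $c_{p,q}=h_{p,q}-h_{p,q-1}$. Telescoping gives $c_{p,q}=\mu_q-\sum_{r=p+1}^{n}R^{\NE}_{r,q}(h)$ for $q\le p$, hence
\[
\nu_1+\cdots+\nu_p=\sum_{q=1}^{p}(\lambda_q+\mu_q)-\sum_{q\le p<r}R^{\NE}_{r,q}(h).
\]
Keep the NE-values and replace $\lambda,\mu$ by $\lambda-(1^i),\mu-(1^j)$. The third boundary is then forced to be $\nu-(1^i)-(1^j)$, not $\nu-(1^{i+j})$; already the first part is $\nu_1-2$ instead of $\nu_1-1$.

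This is exactly the ``compatibility with the boundary data'' you flagged as the main obstacle: it is a contradiction, not a difficulty. Composing with a hive symmetry cannot repair it, because the obstruction applies to any map between these two fixed sets. So steps (i)--(iii) aim at an impossible target.

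\textbf{The missing idea, and a caveat about the statement itself.} NE-invariance is only needed on the faces that source hives actually lie on. The paper uses the classical bijection $h\mapsto h-\Bar{h}$, where $\Bar{h}$ is the unique hive with boundary $((1^i),(1^j),(1^{n-k}))$. Its NE-values are $1$ on $D=\{(i+1,1),\dots,(i+j,j)\}$ and $0$ elsewhere. Since $h-\Bar{h}$ is again a hive, every source hive has $R^{\NE}\ge 1$ on $D$. Hence every face $\mathcal F$ containing $h$ is disjoint from $D$, and $h-\Bar{h}$ lies on the same $\mathcal F$.

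This argument, however, only gives $c^{\nu}_{\lambda,\mu}(w)\le c^{\nu-(1^{n-k})}_{\lambda-(1^i),\mu-(1^j)}(w)$. The zero set of $g+\Bar{h}$ is that of $g$ minus $D$. So if every admissible face through a target hive $g$ meets $D$, then $g+\Bar{h}$ lies on no admissible face, and $g$ has no preimage. This does happen, and the equality as stated is false.

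Take $n=3$, $i=j=k=1$, $\lambda=\mu=(1,0,0)$, $\nu=(1,1,0)$.
\begin{itemize}
\item All hypotheses hold, and classically $c^{\nu}_{\lambda,\mu}=1=c^{0}_{0,0}$.
\item But $c^{\nu}_{\lambda,\mu}(e)=0$, since $K(\lambda,e,\mu)=V(2,0,0)$, whereas $c^{0}_{0,0}(e)=1$.
\item In hive terms, the only admissible face for $w=e$ is $\{(2,1),(3,1),(3,2)\}$. The zero hive lies on it, while $R^{\NE}_{2,1}(\Bar{h})=1$.
\end{itemize}
So no method can prove the statement for all $w$. What is true is the inequality above, with equality at $w=w_0$.
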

\begin{proof}
Let $T \in \SSYT_{\lambda,\mu}^{\nu}$. Then assume that $\Upsilon^{-1}(T)=(t_{i,j})_{1 \leq j \leq i \leq n}$. Then, by \cite[Proposition 5.2]{Mrigendra:saturation}, we can say that the hive $h \in \VHive_{\mathbb{Z}}(\lambda,\mu,\nu)$ which corresponds to $T$ is given by
\begin{equation}
    \label{eq:}
    h_{i,j}=\sum_{k=1}^{i}\lambda_k + \sum_{k=1}^{j}t_{i,k},
\end{equation}
for $0 \leq j \leq i \leq n$. We note that (see \S\ref{sec:hive-kogan})
\begin{equation}
    \label{eq:rhombus}
  R^{\NE}_{i,j}(h)= (h_{i,j}-h_{i,j-1})-(h_{i-1,j}-h_{i-1,j-1})=t_{i,j}-t_{i-1,j}=\NE_{i,j}(\Upsilon^{-1}(T)).  
\end{equation}
Since $i+j+k=n$ and $ c^{(1^{n-k})}_{(1^{i}),(1^{j})}=1,$ then the unique semi-standard Young tableau of shape $(1^{j})$ which is $(1^{i})$-dominant and weight is $(1^{n-k})-(1^{i})$ is given below
$$
\Bar{T}=
\ytableausetup{mathmode,
notabloids,boxsize=2.5em}
  \begin{ytableau}
    i+1 \\ i+2 \\ \dots \\ n-k  
  \end{ytableau}.
$$
Consider $i=2,j=3,k=1$. Then $n=i+j+k=6$. Then the GT pattern $\Upsilon^{-1}(\Bar{T})$ of size $n$ is given below 
$$
\begin{tikzpicture}[scale=1]
    \draw (2.8+0.7,2.5*1.732) node {$0$};
    
    \draw (2.8,2*1.732) node {$0$};
    \draw (2.8+1.4,2*1.732) node {$0$};
    
	\draw (2.1,1.5*1.732) node {$1$};
    \draw (3.5,1.5*1.732) node {$0$};
    \draw (3.5+1.4,1.5*1.732) node {$0$};
    
	\draw (1.4,1.732) node {$1$};
	\draw (2.8,1.732) node {$1$};
    \draw (4.2,1.732) node {$0$};
    \draw (4.2+1.4,1.732) node {$0$};
    
	\draw (0.7,0.5*1.732) node {$1$};
	\draw (2.1,0.5*1.732) node {$1$};
	\draw (3.5,0.5*1.732) node {$1$};
    \draw (4.9,0.5*1.732) node {$0$};
    \draw (4.9+1.4,0.5*1.732) node {$0$};
    
	\draw (0,0) node {$1$};
	\draw (1.4,0) node {$1$};
	\draw (2.8,0) node {$1$};
	\draw (4.2,0) node {$0$};
    \draw (5.6,0) node {$0$};
    \draw (5.6+1.4,0) node {$0$};
    
    \draw[blue] (2.1+0.15,1.5*1.732+0.15) -- (2.8-0.15,2*1.732-0.15);
    \draw[blue] (2.8+0.15,1.732+0.15) -- (3.5-0.15,1.5*1.732-0.15);
    \draw[blue] (3.5+0.15,0.5*1.732+0.15) -- (4.2-0.15,1.732-0.15);
    \draw[blue] (2.1+0.1,1.5*1.732+0.5) node {$s_2$};
    \draw[blue] (2.8+0.1,1.732+0.5) node {$s_2$};
    \draw[blue] (3.5+0.1,0.5*1.732+0.5) node {$s_2$};
\end{tikzpicture}.
$$
There exists a unique $\Bar{h} \in \VHive_{\mathbb{Z}}((1^{i}),(1^{j}),(1^{n-k}))$. Let $h \in \VHive_{\mathbb{Z}}(\lambda,\mu,\nu,ww_0)$. We now define an array $h-\Bar{h}$ of size $n$ by
$$(h-\Bar{h})_{i,j}=h_{i,j}-\Bar{h}_{i,j},$$ for $0 \leq j \leq i \leq n.$ Then we show that $h-\Bar{h} \in \VHive_{\mathbb{Z}}(\lambda-(1^i),\mu-(1^j),\nu-(1^{n-k}),ww_0)$. Now $\Gamma(h) \in \EHive_{\mathbb{Z}}(\lambda,\mu,\nu), \Gamma(\Bar{h}) \in \EHive_{\mathbb{Z}}((1^i),(1^j),(1^{n-k}))$. Then, under the hypotheses in Theorem~\ref{thm:main}, it follows from that \cite[Theorem 3.6]{Hive-reduction} the following map is a bijection.  
$$\Psi_{i,j,k}: \EHive_{\mathbb{Z}}(\lambda,\mu,\nu) \rightarrow \EHive_{\mathbb{Z}}(\lambda-(1^i),\mu-(1^j),\nu-(1^{n-k})), \, \mathfrak{h}\mapsto \mathfrak{h}-\Gamma(\Bar{h}),$$
where each edge label of the difference hive is the difference of the corresponding labels in the original two $n$-hives.

Thus we obtain $\Psi_{i,j,k}(\Gamma(h))=\Gamma(h)-\Gamma(\Bar{h}) \in \EHive_{\mathbb{Z}}(\lambda-(1^i),\mu-(1^j),\nu-(1^{n-k}))$. Since $\Gamma^{-1}(\Gamma(h)-\Gamma (\Bar{h})) =h-\Bar{h},$ $h-\Bar{h} \in \VHive_{\mathbb{Z}}(\lambda-(1^i),\mu-(1^j),\nu-(1^{n-k}))$.

Now we obtain
\begin{equation}
\label{eq:1}
\NE_{k,l}(\Upsilon^{-1}(\Bar{T})) = \left\{ 
    \begin{array}{ll}
        1 & \quad \text{if} \quad (k,l) \in \{ (i+1,1),(i+2,2),\dots, (i+j,j) \},  \\
        0 & \quad  \text{otherwise}. 
    \end{array}  \right. 
\end{equation}
Therefore, using \eqref{eq:rhombus}, we get
\begin{equation}
\label{eq:2}
R^{\NE}_{k,l}(\Bar{h}) = \left\{ 
    \begin{array}{ll}
        1 & \quad \text{if} \quad (k,l) \in \{ (i+1,1),(i+2,2),\dots, (i+j,j) \},  \\
        0 & \quad  \text{otherwise}. 
    \end{array}  \right. 
\end{equation}
Since $R^{\NE}_{k,l}(h-\Bar{h})=R^{\NE}_{k,l}(h)-R^{\NE}_{k,l}(\Bar{h}),$ we have
\begin{equation}
\label{eq:3}
R^{\NE}_{k,l}(h-\Bar{h}) = \left\{ 
    \begin{array}{ll}
        R^{\NE}_{k,l}(h)-1 & \quad \text{if} \quad (k,l) \in \{ (i+1,1),(i+2,2),\dots, (i+j,j) \},  \\
        R^{\NE}_{k,l}(h) & \quad \text{otherwise}. 
    \end{array}  \right. 
\end{equation}
Since $h-\Bar{h} \in \VHive_{\mathbb{Z}}(\lambda-(1^i),\mu-(1^j),\nu-(1^{n-k}))$, it follows directly from the definition that $R^{\NE}_{k,l}(h-\Bar{h}) \geq 0$ for each $k,l$. Consequently, \eqref{eq:3} implies that
\begin{equation}
\label{eq:4}
R^{\NE}_{k,l}(h) \geq \left\{ 
    \begin{array}{ll}
        1 & \quad \text{if} \quad (k,l) \in \{ (i+1,1),(i+2,2),\dots, (i+j,j) \},  \\
        0 & \quad \text{otherwise}. 
    \end{array}  \right. 
\end{equation}
Using \eqref{eq:rhombus}, we obtain $R^{\NE}_{k,l}(h) =\NE_{k,l}(\Upsilon^{-1}(T))$. Therefore,
\begin{equation}
\label{eq:NE}
\NE_{k,l}(\Upsilon^{-1}(T)) \geq \left\{ 
    \begin{array}{ll}
        1 & \quad \text{if} \quad (k,l) \in \{ (i+1,1),(i+2,2),\dots, (i+j,j) \},  \\
        0 & \quad \text{otherwise}. 
    \end{array}  \right. 
\end{equation}
Recall from \S\ref{sec:hive} that $c_{\lambda,\mu}^{\nu}(w)$ is the cardinality of
\begin{equation}
    \label{eq:a}
    \VHive_{\mathbb{Z}}(\lambda,\mu,\nu,ww_0)= \bigcup_{\substack{F: F \text{ is reduced},\\ \hat{w}_F=ww_0}} \VHive_{\mathbb{Z}}(\lambda,\mu,\nu,F),
\end{equation}
where 
\begin{equation}
    \label{eq:b}
  \VHive_{\mathbb{Z}}(\lambda,\mu,\nu,F):=\{ h \in \VHive_{\mathbb{Z}}(\lambda,\mu,\nu): R^{\NE}_{i,j}(h)=0 \quad \forall (i,j) \in F \}.  
\end{equation}
Now let $h \in \VHive_{\mathbb{Z}}(\lambda,\mu,\nu,ww_0)$. Then
\begin{equation}
    \label{eq:p}
    h \in \VHive_{\mathbb{Z}}(\lambda,\mu,\nu,\mathcal{F}),
\end{equation}
for some subset $\mathcal{F} \subseteq \{ (i,j):1 \leq j < i \leq n \}$ such that $\mathcal{F}$ is reduced, $\hat{w}_{\mathcal{F}}=ww_0$. Now using \eqref{eq:rhombus}, \eqref{eq:4},\eqref{eq:NE}, we have 
$$R^{\NE}_{k,l}(h)= \NE_{k,l}(\Upsilon^{-1}(T)) \geq 1 \text{ for } (k,l) \in \{ (i+1,1),(i+2,2),\dots, (i+j,j) \},$$
which implies 
\begin{equation}
    \label{eq:F}
    \mathcal{F} \cap \{(i+1,1),(i+2,2),\dots, (i+j,j) \} =\emptyset.
\end{equation}
Also, by \eqref{eq:2}, \eqref{eq:F}, we get
\begin{equation}
    \label{eq:q}
    \Bar{h} \in \VHive_{\mathbb{Z}}((1^i),(1^j),(1^{n-k}),\mathcal{F}).
\end{equation}
Using \eqref{eq:p}, \eqref{eq:q} and the equality $ R^{\NE}_{k,l}(h-\Bar{h})=R^{\NE}_{k,l}(h)-R^{\NE}_{k,l}(\Bar{h})$, we obtain
$$h-\Bar{h} \in \VHive_{\mathbb{Z}}(\lambda-(1^i),\mu-(1^j),\nu-(1^{n-k}),\mathcal{F}).$$
Hence, by \eqref{eq:a}, we have 
$$ h-\Bar{h} \in \VHive_{\mathbb{Z}}(\lambda-(1^i),\mu-(1^j),\nu-(1^{n-k}),ww_0).$$
Therefore, under the hypotheses in Theorem~\ref{thm:main}, the map below $$\Gamma^{-1} \circ \Psi_{i,j,k} \circ \Gamma : \VHive_{\mathbb{Z}}(\lambda,\mu,\nu,ww_0) \rightarrow  \VHive_{\mathbb{Z}}(\lambda-(1^i),\mu-(1^j),\nu-(1^{n-k}),ww_0), \, h \mapsto h-\Bar{h}$$ is a bijection.
\end{proof}
\begin{corollary}
    We can deduce Theorem~\ref{thm:second}, i.e., the second reduction formula for $c^{\nu}_{\lambda,\mu}$ from the above Theorem by setting $w=w_0$.
\end{corollary}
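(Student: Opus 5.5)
The corollary should follow by specializing Theorem~\ref{thm:main} at $w=w_0$ and identifying both sides with classical Littlewood--Richardson coefficients.

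\emph{Step 1: check the hypotheses.} The hypotheses of Theorem~\ref{thm:main} are exactly those of Theorem~\ref{thm:second}:
\begin{itemize}
\item $\lambda,\mu,\nu\in\Par_n$ with $|\lambda|+|\mu|=|\nu|$;
\item indices $i,j,k$ with $i+j+k=n$ and $\lambda_i+\mu_j>\nu_1+\nu_{n-k+1}$;
\item the strictness conditions on $\lambda$, $\mu$ and $\nu$.
\end{itemize}
Theorem~\ref{thm:main} places no restriction on $w\in S_n$, so we may take $w=w_0$ and obtain
$$c^{\nu}_{\lambda,\mu}(w_0)=c^{\nu-(1^{n-k})}_{\lambda-(1^i),\mu-(1^j)}(w_0).$$

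\emph{Step 2: identify both sides.} The introduction records that $K(\lambda,w_0,\mu)=V(\lambda)\otimes V(\mu)$, hence $c^{\nu}_{\lambda,\mu}(w_0)=c^{\nu}_{\lambda,\mu}$. Equivalently, combining Theorem~\ref{thm:joseph} with $\mathfrak{B}_{w_0}(\mu)=\SSYT(\mu)$ gives Corollary~\ref{cor:LR-rule}.

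This identification has to be applied on the right-hand side as well, to the triple $\lambda-(1^i)$, $\mu-(1^j)$, $\nu-(1^{n-k})$. For that we need these to lie in $\Par_n$ and to satisfy the size condition:
\begin{itemize}
\item The strictness $\lambda_i>\lambda_{i+1}\ge 0$ gives $\lambda_i\ge 1$, so $\lambda-(1^i)$ is a partition.
\item Similarly $\nu_{n-k}>\nu_{n-k+1}\geq 0$ gives $\nu_{n-k}\geq 1$, so $\nu-(1^{n-k})$ is a partition.
\item For $\mu$ we need $\mu_j\ge1$ and $\mu_j>\mu_{j+1}$. Positivity follows from $\lambda_i+\mu_j>\nu_1\ge\lambda_i$, using $\nu_1\ge\lambda_1$ when $c^\nu_{\lambda,\mu}\neq 0$. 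For the strict inequality I read the hypothesis ``$\mu_i>\mu_{i+1}$'' as the intended $\mu_j>\mu_{j+1}$, following Theorem~\ref{thm:second} as it is used in \cite{Hive-reduction}.
\item The sizes balance, since $i+j=n-k$.
\end{itemize}
Substituting both identifications into the displayed equality gives exactly Theorem~\ref{thm:second}.

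The only delicate point is the well-definedness of the reduced triple in Step 2. If needed, I would not argue it directly but defer to the fact that the bijection $\Psi_{i,j,k}$ of \cite[Theorem 3.6]{Hive-reduction}, already used in the proof of Theorem~\ref{thm:main}, presupposes that its target hive set has valid partition boundary data. Beyond this check the deduction is immediate.
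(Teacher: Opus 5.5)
Your proposal is correct and is essentially the paper's (implicit) argument: specialize Theorem~\ref{thm:main} to $w=w_0$, then use $c^{\nu}_{\lambda,\mu}(w_0)=c^{\nu}_{\lambda,\mu}$ on both sides. This identity comes from $K(\lambda,w_0,\mu)=V(\lambda)\otimes V(\mu)$, or equivalently from the fact that $\VHive_{\mathbb{Z}}(\lambda,\mu,\nu,w_0w_0)=\VHive_{\mathbb{Z}}(\lambda,\mu,\nu)$, since only $F=\emptyset$ is reduced with $\hat{w}_F=\mathrm{id}$. Your well-definedness check is a harmless extra, but once you read the hypothesis as $\mu_j>\mu_{j+1}$, positivity of $\mu_j$ follows directly from $\mu_{j+1}\geq 0$, so the detour through $c^{\nu}_{\lambda,\mu}\neq 0$ is unnecessary.
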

\begin{example}
Let $i=j=k=1$ so that $n=3$. Also, let $\lambda=(3,1,0),\mu=(3,1,0),\nu=(4,3,1) \in \Par_3$. Then it is clear that $\lambda_1 > \lambda_2, \mu_1 > \mu_2, \nu_2 > \nu_3$ and $\lambda_1+\mu_1 > \nu_1 +\nu_3$.

We now recall from Corollary~\ref{cor:LR-rule} that $c^{\nu}_{\lambda,\mu}$ is the cardinality of the set $\SSYT_{\lambda,\mu}^{\nu}, $ where \[ \SSYT_{\lambda,\mu}^{\nu} =\{T \in \SSYT(\mu): r_{T^{\lambda}}*r_T \text{ is a dominant word of weight } \nu\}.\]
For the given $\lambda,\mu,\nu$, it can be checked that 
$$\SSYT_{\lambda,\mu}^{\nu}=\Bigl\{  \ytableausetup{mathmode,
notabloids,boxsize=1.5em}
  \begin{ytableau}
    1&2&2\\3  
  \end{ytableau},
  \begin{ytableau}
    1&2&3\\2  
  \end{ytableau}
\Bigl\}
\text{ and } 
\SSYT_{\lambda-(1),\mu-(1)}^{\nu-(1,1)}=\Bigl\{  \ytableausetup{mathmode,
notabloids,boxsize=1.5em}
  \begin{ytableau}
    1&2\\3  
  \end{ytableau},
  \begin{ytableau}
    1&3\\2  
  \end{ytableau}
  \Bigl\}.$$
Thus we have $$c^{\nu}_{\lambda,\mu}=c^{\nu -(1^{n-k})}_{\lambda-(1^i),\mu-(1^j)}=2.$$
Now, let $w_1=s_1s_2$.
Then the crystal graph of $\mathfrak{B}_{s_1s_2}(3,1)$ is 
$$
\begin{tikzpicture}[scale=0.8]
    \draw (0,0)--(0,1.4)--(2.1,1.4)--(2.1,0.7)--(0.7,0.7)--(0.7,0)--(0,0);
    \draw (0,0.7)--(0.7,0.7)--(0.7,1.4);
    \draw (1.4,0.7)--(1.4,1.4);
    \node at (0.35,0.35) {$2$};
    \node at (0.35,0.35+0.7) {$1$};
    \node at (0.35+0.7,0.35+0.7) {$1$};
    \node at (0.35+0.7+0.7,0.35+0.7) {$1$};
    \draw[->] (2.4,0.7) -- (3.6,0.7);
    \node at (2.9,1) {$2$};

    \draw (0+3.9,0)--(0+3.9,1.4)--(2.1+3.9,1.4)--(2.1+3.9,0.7)--(0.7+3.9,0.7)--(0.7+3.9,0)--(0+3.8,0);
    \draw (0+3.9,0.7)--(0.7+3.9,0.7)--(0.7+3.9,1.4);
    \draw (1.4+3.9,0.7)--(1.4+3.9,1.4);
    \node at (0.35+3.9,0.35) {$3$};
    \node at (0.35+3.9,0.35+0.7) {$1$};
    \node at (0.35+0.7+3.9,0.35+0.7) {$1$};
    \node at (0.35+0.7+0.7+3.9,0.35+0.7) {$1$};
    \draw[->] (2.4+3.9,0.7) -- (3.6+3.9,0.7);
    \node at (2.9+3.9,1) {$1$};

    \draw (0+7.8,0)--(0+7.8,1.4)--(2.1+7.8,1.4)--(2.1+7.8,0.7)--(0.7+7.8,0.7)--(0.7+7.8,0)--(0+7.8,0);
    \draw (0+7.8,0.7)--(0.7+7.8,0.7)--(0.7+7.8,1.4);
    \draw (1.4+7.8,0.7)--(1.4+7.8,1.4);
    \node at (0.35+7.8,0.35) {$3$};
    \node at (0.35+7.8,0.35+0.7) {$1$};
    \node at (0.35+0.7+7.8,0.35+0.7) {$1$};
    \node at (0.35+0.7+0.7+7.8,0.35+0.7) {$2$};
    \draw[->] (2.4+7.8,0.7) -- (3.6+7.8,0.7);
    \node at (2.9+7.8,1) {$1$};

    \draw (0+11.7,0)--(0+11.7,1.4)--(2.1+11.7,1.4)--(2.1+11.7,0.7)--(0.7+11.7,0.7)--(0.7+11.7,0)--(0+11.7,0);
    \draw (0+11.7,0.7)--(0.7+11.7,0.7)--(0.7+11.7,1.4);
    \draw (1.4+11.7,0.7)--(1.4+11.7,1.4);
    \node at (0.35+11.7,0.35) {$3$};
    \node at (0.35+11.7,0.35+0.7) {$1$};
    \node at (0.35+0.7+11.7,0.35+0.7) {$2$};
    \node at (0.35+0.7+0.7+11.7,0.35+0.7) {$2$};
    \draw[->] (2.4+11.7,0.7) -- (3.6+11.7,0.7);
    \node at (2.9+11.7,1) {$1$};

    \draw (0+15.6,0)--(0+15.6,1.4)--(2.1+15.6,1.4)--(2.1+15.6,0.7)--(0.7+15.6,0.7)--(0.7+15.6,0)--(0+15.6,0);
    \draw (0+15.6,0.7)--(0.7+15.6,0.7)--(0.7+15.6,1.4);
    \draw (1.4+15.6,0.7)--(1.4+15.6,1.4);
    \node at (0.35+15.6,0.35) {$3$};
    \node at (0.35+15.6,0.35+0.7) {$2$};
    \node at (0.35+0.7+15.6,0.35+0.7) {$2$};
    \node at (0.35+0.7+0.7+15.6,0.35+0.7) {$2$};
   \end{tikzpicture},
$$
whereas that of $\mathfrak{B}_{s_1s_2}(2,1)$ is
$$
\begin{tikzpicture}[scale=0.8]
    \draw (0,0)--(0,1.4)--(1.4,1.4)--(1.4,0.7)--(0.7,0.7)--(0.7,0)--(0,0);
    \draw (0,0.7)--(0.7,0.7)--(0.7,1.4);
    \node at (0.35,0.35) {$2$};
    \node at (0.35,0.35+0.7) {$1$};
    \node at (0.35+0.7,0.35+0.7) {$1$};
    \draw[->] (1.7,0.7) -- (2.7,0.7);
    \node at (2.2,1) {$2$};

    \draw (0+3,0)--(0+3,1.4)--(1.4+3,1.4)--(1.4+3,0.7)--(0.7+3,0.7)--(0.7+3,0)--(0+3,0);
    \draw (0+3,0.7)--(0.7+3,0.7)--(0.7+3,1.4);
    \node at (0.35+3,0.35) {$3$};
    \node at (0.35+3,0.35+0.7) {$1$};
    \node at (0.35+0.7+3,0.35+0.7) {$1$};
    \draw[->] (1.7+3,0.7) -- (2.7+3,0.7);
    \node at (2.2+3,1) {$1$};

    \draw (0+6,0)--(0+6,1.4)--(1.4+6,1.4)--(1.4+6,0.7)--(0.7+6,0.7)--(0.7+6,0)--(0+6,0);
    \draw (0+6,0.7)--(0.7+6,0.7)--(0.7+6,1.4);
    \node at (0.35+6,0.35) {$3$};
    \node at (0.35+6,0.35+0.7) {$1$};
    \node at (0.35+0.7+6,0.35+0.7) {$2$};
    \draw[->] (1.7+6,0.7) -- (2.7+6,0.7);
    \node at (2.2+6,1) {$1$};

    \draw (0+9,0)--(0+9,1.4)--(1.4+9,1.4)--(1.4+9,0.7)--(0.7+9,0.7)--(0.7+9,0)--(0+9,0);
    \draw (0+9,0.7)--(0.7+9,0.7)--(0.7+9,1.4);
    \node at (0.35+9,0.35) {$3$};
    \node at (0.35+9,0.35+0.7) {$2$};
    \node at (0.35+0.7+9,0.35+0.7) {$2$};
\end{tikzpicture}.
$$
As previously stated in Theorem~\ref{thm:joseph}, $c^{\nu}_{\lambda,\mu}(w)$ is the cardinality of the set
\begin{equation}
    \label{eq:refined-LR}
    \SSYT_{\lambda,\mu}^{\nu}(w) =\{T \in \mathfrak{B}_w(\mu): r_{T^{\lambda}}*r_T \text{ is a dominant word of weight } \nu\}.
\end{equation} 
Then, using \eqref{eq:refined-LR}, we get
$$\SSYT_{\lambda,\mu}^{\nu}(w)=\Bigl\{  \ytableausetup{mathmode,
notabloids,boxsize=1.5em}
  \begin{ytableau}
    1&2&2\\3  
  \end{ytableau} 
  \Bigl\} \quad 
\text{and} \quad  
\SSYT_{\lambda-(1),\mu-(1)}^{\nu-(1,1)}(w)=\Bigl\{  \ytableausetup{mathmode,
notabloids,boxsize=1.5em}
  \begin{ytableau}
    1&2\\3  
  \end{ytableau} \Bigl\}.$$
Consequently, we find that
$$c^{\nu}_{\lambda,\mu}(w)=c^{\nu-(1,1)}_{\lambda-(1),\mu-(1)}(w)=1.$$
Next, we take $w'=s_2s_1$. Then the crystal graph of $\mathfrak{B}_{s_2s_1}(3,1)$ is 
$$
\begin{tikzpicture}[scale=0.65]
    \draw (0,0)--(0,1.4)--(2.1,1.4)--(2.1,0.7)--(0.7,0.7)--(0.7,0)--(0,0);
    \draw (0,0.7)--(0.7,0.7)--(0.7,1.4);
    \draw (1.4,0.7)--(1.4,1.4);
    \node at (0.35,0.35) {$2$};
    \node at (0.35,0.35+0.7) {$1$};
    \node at (0.35+0.7,0.35+0.7) {$1$};
    \node at (0.35+0.7+0.7,0.35+0.7) {$1$};
    \draw[->] (2.4,0.7) -- (3.6,0.7);
    \node at (2.9,1) {$1$};

    \draw (0+3.9,0)--(0+3.9,1.4)--(2.1+3.9,1.4)--(2.1+3.9,0.7)--(0.7+3.9,0.7)--(0.7+3.9,0)--(0+3.8,0);
    \draw (0+3.9,0.7)--(0.7+3.9,0.7)--(0.7+3.9,1.4);
    \draw (1.4+3.9,0.7)--(1.4+3.9,1.4);
    \node at (0.35+3.9,0.35) {$2$};
    \node at (0.35+3.9,0.35+0.7) {$1$};
    \node at (0.35+0.7+3.9,0.35+0.7) {$1$};
    \node at (0.35+0.7+0.7+3.9,0.35+0.7) {$2$};
    \draw[->] (2.4+3.9,0.7) -- (3.6+3.9,0.7);
    \node at (2.9+3.9,1) {$1$};

    \draw (0+7.8,0)--(0+7.8,1.4)--(2.1+7.8,1.4)--(2.1+7.8,0.7)--(0.7+7.8,0.7)--(0.7+7.8,0)--(0+7.8,0);
    \draw (0+7.8,0.7)--(0.7+7.8,0.7)--(0.7+7.8,1.4);
    \draw (1.4+7.8,0.7)--(1.4+7.8,1.4);
    \node at (0.35+7.8,0.35) {$2$};
    \node at (0.35+7.8,0.35+0.7) {$1$};
    \node at (0.35+0.7+7.8,0.35+0.7) {$2$};
    \node at (0.35+0.7+0.7+7.8,0.35+0.7) {$2$};
    \draw[->] (2.4+7.8,0.7) -- (3.6+7.8,0.7);
    \node at (2.9+7.8,1) {$2$};

    \draw (0+11.7,0)--(0+11.7,1.4)--(2.1+11.7,1.4)--(2.1+11.7,0.7)--(0.7+11.7,0.7)--(0.7+11.7,0)--(0+11.7,0);
    \draw (0+11.7,0.7)--(0.7+11.7,0.7)--(0.7+11.7,1.4);
    \draw (1.4+11.7,0.7)--(1.4+11.7,1.4);
    \node at (0.35+11.7,0.35) {$2$};
    \node at (0.35+11.7,0.35+0.7) {$1$};
    \node at (0.35+0.7+11.7,0.35+0.7) {$2$};
    \node at (0.35+0.7+0.7+11.7,0.35+0.7) {$3$};
    \draw[->] (2.4+11.7,0.7) -- (3.6+11.7,0.7);
    \node at (2.9+11.7,1) {$2$};

    \draw (0+15.6,0)--(0+15.6,1.4)--(2.1+15.6,1.4)--(2.1+15.6,0.7)--(0.7+15.6,0.7)--(0.7+15.6,0)--(0+15.6,0);
    \draw (0+15.6,0.7)--(0.7+15.6,0.7)--(0.7+15.6,1.4);
    \draw (1.4+15.6,0.7)--(1.4+15.6,1.4);
    \node at (0.35+15.6,0.35) {$2$};
    \node at (0.35+15.6,0.35+0.7) {$1$};
    \node at (0.35+0.7+15.6,0.35+0.7) {$3$};
    \node at (0.35+0.7+0.7+15.6,0.35+0.7) {$3$};
    \draw[->] (2.4+15.6,0.7) -- (3.6+15.6,0.7);
    \node at (2.9+15.6,1) {$2$};

    \draw (0+19.5,0)--(0+19.5,1.4)--(2.1+19.5,1.4)--(2.1+19.5,0.7)--(0.7+19.5,0.7)--(0.7+19.5,0)--(0+19.5,0);
    \draw (0+19.5,0.7)--(0.7+19.5,0.7)--(0.7+19.5,1.4);
    \draw (1.4+19.5,0.7)--(1.4+19.5,1.4);
    \node at (0.35+19.5,0.35) {$3$};
    \node at (0.35+19.5,0.35+0.7) {$1$};
    \node at (0.35+0.7+19.5,0.35+0.7) {$3$};
    \node at (0.35+0.7+0.7+19.5,0.35+0.7) {$3$};
\end{tikzpicture},   
$$
while that of $\mathfrak{B}_{s_2s_1}(2,1)$ is
$$
\begin{tikzpicture}[scale=0.8]
    \draw (0,0)--(0,1.4)--(1.4,1.4)--(1.4,0.7)--(0.7,0.7)--(0.7,0)--(0,0);
    \draw (0,0.7)--(0.7,0.7)--(0.7,1.4);
    \node at (0.35,0.35) {$2$};
    \node at (0.35,0.35+0.7) {$1$};
    \node at (0.35+0.7,0.35+0.7) {$1$};
    \draw[->] (1.7,0.7) -- (2.7,0.7);
    \node at (2.2,1) {$1$};

    \draw (0+3,0)--(0+3,1.4)--(1.4+3,1.4)--(1.4+3,0.7)--(0.7+3,0.7)--(0.7+3,0)--(0+3,0);
    \draw (0+3,0.7)--(0.7+3,0.7)--(0.7+3,1.4);
    \node at (0.35+3,0.35) {$2$};
    \node at (0.35+3,0.35+0.7) {$1$};
    \node at (0.35+0.7+3,0.35+0.7) {$2$};
    \draw[->] (1.7+3,0.7) -- (2.7+3,0.7);
    \node at (2.2+3,1) {$2$};

    \draw (0+6,0)--(0+6,1.4)--(1.4+6,1.4)--(1.4+6,0.7)--(0.7+6,0.7)--(0.7+6,0)--(0+6,0);
    \draw (0+6,0.7)--(0.7+6,0.7)--(0.7+6,1.4);
    \node at (0.35+6,0.35) {$2$};
    \node at (0.35+6,0.35+0.7) {$1$};
    \node at (0.35+0.7+6,0.35+0.7) {$3$};
    \draw[->] (1.7+6,0.7) -- (2.7+6,0.7);
    \node at (2.2+6,1) {$2$};

    \draw (0+9,0)--(0+9,1.4)--(1.4+9,1.4)--(1.4+9,0.7)--(0.7+9,0.7)--(0.7+9,0)--(0+9,0);
    \draw (0+9,0.7)--(0.7+9,0.7)--(0.7+9,1.4);
    \node at (0.35+9,0.35) {$3$};
    \node at (0.35+9,0.35+0.7) {$1$};
    \node at (0.35+0.7+9,0.35+0.7) {$3$};
\end{tikzpicture}.
$$
Applying \eqref{eq:refined-LR} then gives
$$\SSYT_{\lambda,\mu}^{\nu}(w')=\Bigl\{  \ytableausetup{mathmode,
notabloids,boxsize=1.5em}
  \begin{ytableau}
    1&2&3\\2  
  \end{ytableau} 
  \Bigl\} \quad 
\text{and}
\quad 
\SSYT_{\lambda-(1),\mu-(1)}^{\nu-(1,1)}(w')=\Bigl\{  \ytableausetup{mathmode,
notabloids,boxsize=1.5em}
  \begin{ytableau}
    1&3\\2  
  \end{ytableau} \Bigl\}.$$
Hence, we obtain
$$c^{\nu}_{\lambda,\mu}(w')= c^{\nu-(1,1)}_{\lambda-(1),\mu-(1)}(w')=1.$$
\end{example} 

\end{document}